\numberwithin{equation}{section}
\newtheorem{theorem}{Theorem}[]
\newtheorem{lemma}[theorem]{Lemma}
\newtheorem{conjecture}[theorem]{Conjecture}
\newcommand\groupequation[2][17pt]{%
  \setbox0=\hbox{$\displaystyle#2$}%
  \stackengine{0pt}{\copy0}{%
    \makebox[\linewidth]{\hfill$\left.\rule{0pt}{\ht0}\right\}$\kern#1}}
    {O}{c}{F}{T}{L}
}
\newcommand\Item[1][]{%
  \ifx\relax#1\relax  \item \else \item[#1] \fi
  \abovedisplayskip=0pt\abovedisplayshortskip=0pt~\vspace*{-\baselineskip}}
\theoremstyle{definition}
\theoremstyle{definition}
\theoremstyle{definition}
\title{Antimagicness of Tensor product for some wheel related graphs with star}
\date{22/11/2023}
\author{Vinothkumar Latchoumanane$^{ \ 1}$ and Murugan Varadhan$^{ \ 2,*}$}
\address{${}^{1,2}$Department \ of \ Mathematics, \ School \ of \ Advanced \ Sciences, \  Vellore \ Institute \ of\  Technology, \  Vellore \ - \ $632014$,\ India.} 
\address{* Correspondence} 
\email{${}^{1}$noblevino01@gmail.com, ${}^{2,*}$murugan.v@vit.ac.in}
\begin{document}
\bibliographystyle{plainnat}
\begin{abstract}
A graph $G$ with $p$ vertices and $q$ edges has an antimagic labelling if there is a bijection from the graph's edge set to the label set $\left\{1,2, \cdots, q \right\}$ such that $p$ vertices must have distinct vertex sums, where the vertex sums are determined by adding up all the edge labels incident to each vertex $v$ in $V(G)$. Hartsfield and Ringel \cite{Ringel1} in the book "Pearls in Graph Theory" conjectured that every connected graph is antimagic, with the exception of $P_2$. In this study, we identified a class of connected graphs that lend credence to the conjecture. In this article, we proved that the tensor product of a wheel and a star, a helm and a star, and a flower and a star is antimagic.
\end{abstract}
\subjclass[2020]{}Primary 05C78, Secondary 05C76
\keywords{Tensor Product, Wheel Graph, Star Graph, Connected Graph and Antimagic Labeling.}
\maketitle
\section{Introduction}

\par In this paper, we considered only simple, undirected, finite, and connected graphs. For a given graph $G, \left|V(G)\right| = p$ and $\left|E(G)\right|=q.$ A graph $G$ is antimagic if there exists a bijective function $f : E(G) \rightarrow \left\{1,2, \cdots, q\right\}$ such that $\phi_{f} (u) \neq \phi_{f} (v) $ for every $u,v \in V(G),$ where $\phi_{f} (u)$ is defined as the sum of labels assigned to the edges which are incident to the vertex $u.$ In the introductory book "Pearls in Graph theory", Hartsfield and Ringel \cite{Ringel1} introduced antimagic graph. Moreover, they proposed the following conjecture.
\begin{conjecture} \cite{Ringel1}
    All connected graphs other than $P_2$ is antimagic.
\end{conjecture}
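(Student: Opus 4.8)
The statement to be established is the full Hartsfield--Ringel conjecture: every connected graph other than $P_2$ admits an antimagic labelling, i.e.\ a bijection $f:E(G)\to\{1,2,\dots,q\}$ with $\phi_f(u)\neq\phi_f(v)$ for all distinct $u,v\in V(G)$. The plan is to split the argument according to edge density, since the available techniques behave very differently in the dense and sparse regimes, and $P_2$ arises exactly as the one graph too sparse for any method (its single edge forces $\phi_f(u)=\phi_f(v)=1$).

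First I would dispose of the dense case. For a graph on $p$ vertices whose minimum degree is large --- say $\delta(G)\ge c\log p$ for a suitable absolute constant $c$ --- a probabilistic argument applies: assign the labels $\{1,\dots,q\}$ to the edges uniformly at random, bound the probability of a collision $\phi_f(u)=\phi_f(v)$ for a fixed pair, and take a union bound over the $\binom{p}{2}$ pairs. The quantitative input is that at a high-degree vertex the sum $\phi_f(u)$ is an aggregate of many distinct labels, so its distribution is spread out enough that each pairwise collision has probability $o(p^{-2})$; the union bound then yields an antimagic labelling with positive probability. This settles all sufficiently dense connected graphs in one stroke.

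The substantive work is the sparse case, whose hardest instances are graphs with many degree-$2$ vertices, trees and subdivisions being the prototypes. Here I would argue constructively: fix a spanning tree $T$ of $G$, root it, and process the internal vertices in a bottom-up order, allocating labels so that each vertex-sum is forced pairwise-distinct at the moment it is finalised. Leaves are handled last, and since a leaf's sum equals the single label on its pendant edge, injectivity of the remaining allocation makes those sums automatically distinct. The delicate vertices are those of degree $2$, where two sums $a+b$ and $c+d$ can easily coincide; the assignment must be steered --- for example by always pairing a large label with a small one at each such vertex --- while maintaining an invariant that forbids two finalised sums from colliding.

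The hard part, and the reason the conjecture is still open, is precisely this sparse regime: no single ordering or invariant is known to control all low-degree vertices simultaneously across \emph{every} connected graph. I expect the bottleneck to be proving that the constructive labelling never gets \emph{stuck}, i.e.\ that one is never forced into a collision because all admissible remaining labels are exhausted. Ruling this out uniformly, rather than family-by-family (as is done for wheels, helms, and flowers elsewhere in this paper), is the crux. A complete proof would likely require either a global counting or discharging argument bounding the number of dangerous coincidences, or a reformulation in which an antimagic labelling corresponds to a feasible flow or a system of distinct representatives whose existence follows from a Hall- or Hoffman-type feasibility condition.
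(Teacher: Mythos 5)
The statement you were asked about is a \emph{conjecture}, not a theorem: the paper cites it from Hartsfield and Ringel and offers no proof of it, contributing only supporting evidence in the form of special families (tensor products of wheels, helms, and flowers with stars). So there is no ``paper's own proof'' to compare against, and your proposal --- to your credit --- concedes as much in its final paragraph. But what you have written is therefore not a proof attempt that succeeds or fails at a step; it is a survey of why the problem is hard, and as such it contains a genuine and unfixable gap: the entire sparse regime. Your bottom-up tree-processing scheme is never specified to the point where one could check the invariant you invoke, and indeed no such invariant is known to exist --- the conjecture remains open even for trees with many degree-$2$ vertices, which is exactly the class your sketch identifies as delicate. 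Saying the construction ``must be steered'' while ``maintaining an invariant that forbids collisions'' is a restatement of the goal, not an argument.

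A secondary inaccuracy: your dense case overstates what a uniform random labelling with a union bound delivers. The result of Alon, Kaplan, Lev, Roditty and Yuster (cited in the paper) that $\delta(G)\geq c\log p$ implies antimagicness is correct, but their proof is not the one-line first-moment argument you describe; a fixed pair of adjacent low-degree vertices does not have collision probability $o(p^{-2})$ under a purely uniform assignment, and their argument requires partitioning the labels and a more careful, multi-stage assignment with corrections. If you intend to quote their theorem as a black box for the dense case, do so explicitly rather than re-deriving it loosely. Either way, the correct disposition here is that the statement is an open conjecture: the paper (like yours) can only verify it family-by-family, which is precisely what its Theorems 2, 3, 9 and 13 do.
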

For the general results on the antimagicness of connected graphs, Alon et al., \cite{Alon} proved for dense graphs. Also, they further investigated for the graph with minimum degree $\delta (G) \geq c \ log \   p$ for some constant value $'c'$ or with maximum degree $\Delta(G) \geq p-2$ is antimagic. Further, yilma \cite{Yilma} extended the graph with maximum degree $\Delta(G) \geq p-3$. 

\par To support the conjecture $1.1$, several authors constructed connected graphs using product of graphs. More particularly, Yu Chang Liang  and Xuding Zhu \cite{LiangTCS} restricted graph $G_2$ with the following conditions: (i) if $\left|V(G_2)\right|-1 \leq \left|E(G_2)\right|$ and (ii) every connected component of the graph $G_2$ having a odd degree vertex (or) atleast $G_2$ must have $2\left|V(G_2)-2\right|$ edges, thus the graph $G_2 \times K_2$ is antimagic.
Yongxi cheng \cite{ycheng1} proved that cartesian product of two (or) more components of regular graphs admits antimagicness and also they investigated for cartesian product of cycle with cycle graph. Tao ming wang \cite{WangDM} shown that the cartesian product of $P_m (m \geq 2)$ and $P_n (n \geq 2),$ $P_m (m \geq 2)$ and $G$, $C_m (m \geq 3)$ and $G$, where $G$ is a $l$ - regular graph for $l \geq 1$ are antimagic. 

\par Tao ming wang also proved that the lexicographic product of $G \times H$ is antimagic where $G$ is any arbitrary graph and $H$ is a $t-$ regular graph for $t \geq 1.$ Yingyu Lu \cite{YingyuMCS} shown that the lexicographic product of $K_{m,n}$ and the $P_k$ is antimagic. Wenhui Ma et al., \cite{WenhuiAKCE} proved lexicographic product of path and path is antimagic. Antimagic labeling for join graphs was done by Martin Ba{\v{c}}a et al. \cite{Baca}. For more results on product of graphs, the readers can refer to Gallian survey \cite{Gallian}. In this paper, we proved that the tensor product of wheel and a star, a helm and a star, and a flower and a star is antimagic.
 \section{ Preliminaries}
The tensor product of graph $G$ and $H$ is defined as the vertex sets $V(G) \times V(H),$ the vertex pairs $(x_1,y_1)$ and $(x_2,y_2)$ is adjacent in $G \times H$ iff $x_1 x_2 \in E(G)$ and $y_1 y_2 \in E(H).$ The tensor product of graph satisfies the associative and commutative property.   P.M. Weichsel in the year $(1963)$  proved that "the tensor product of graph $G_1 \times G_2$ is disconnected iff any one of the graph $G_1$ or $G_2$ contains no odd cycle" \cite{Weichsel}. In this paper, we considered graphs consisting of odd cycles, i.e., Wheel graph, Helm graph and Flower graph. 

\par The star graph is a complete bipartite graph $K_{1,n}$, the vertex $u_{0}$ is adjacent to all the vertices from the set $\left\{u_{1}, u_{2}, \cdots , u_{n}\right\}$, where $u_{0}$ is a central vertex. The vertex $u_{i}, 1 \leq i \leq m$ forms a path $P_{m}$, by joining the vertex $u_{i}$ and $u_{i+1}.$ The cycle graph $C_{m}$, is formed from path graph by joining the vertex $u_{m}$ and $u_{1}$. The Wheel graph $W_{m}$, is obtained from the cycle graph by adding a central vertex $u_{0}$ and joining it to each vertex of $u_{i}, 1 \leq i \leq m.$ The Helm graph $H_{m}$, is obtained from the wheel graph by taking $m$  vertices namely $u_{m+i}, 1 \leq i \leq m$ and join an edge between $u_{i}$ to $u_{m+i}$ for $1 \leq i \leq m.$ The flower graph $Fl_{m}$, can be formed from the helm graph by joining the vertex $u_{m+i}, 1 \leq i \leq m$ to the vertex $u_{0}$. In section $3.1$ we discussed the antimagicness for the tensor product of Wheel and Star, the antimagicness for the tensor product of Helm and Star were discussed in section $3.2$ and in section $3.3$ we discussed the antimagicness for the tensor product of Flower and Star.
\section{Main Results}
In this section, we constructed irregular connected graph with the help of tensor product of some wheel related graphs.
\subsection{Antimagicness for tensor product of wheel and star} 
By definition of tensor product, the vertex set of $W_m \times K_{1,n}$ are defined as,
\begin{equation} \nonumber
V(W_m \times K_{1,n}) = \left\{(u_i, v_j) = w_{i}^{j}: 0 \leq i \leq m,  0 \leq j \leq n\right\}
\end{equation}
We can write this vertex set of $G$ as,
\begin{equation} \nonumber
V(G) = \left\{w_{0}^{0}\right\} \cup \alpha^{'} \cup  \alpha^{''} \cup  \alpha^{'''}
\end{equation}
where, $\alpha^{'} = \left\{w_{i}^{j}: 1 \leq i \leq m, 1 \leq j \leq n\right\},
\alpha^{''} = \left\{w_{i}^{0}: 1 \leq i \leq m\right\}$ and \\
$\alpha^{'''} = \left\{w_{0}^{j}: 1 \leq j \leq n\right\}$. And the edge set of $G$ as,
\begin{equation} \nonumber
E(G) = \beta^{'} \cup \beta^{''} \cup \beta^{'''}
\end{equation}
where,
\begin{equation} \nonumber
\begin{split}
\beta^{'} &= \left\{(w_{0}^{0}, w_{i}^{j}): 1 \leq i \leq m, 1 \leq j \leq n\right\} \\
\beta^{''} &= \left\{(w_{i}^{j}, w_{i+1}^{0}): 1 \leq i \leq m-1 \& 1 \leq j \leq n\right\}  \cup \left\{(w_{i}^{0}: w_{i+1}^{j}), 1 \leq i \leq m-1, 1 \leq j \leq n\right\} \\& \cup \left\{(w_{m}^{0}, w_{1}^{j}): 1 \leq j \leq n\right\} \cup \left\{(w_{1}^{0}, w_{m}^{j}): 1 \leq j \leq n\right\} \\
\beta^{'''} &= \left\{(w_{i}^{0}, w_{0}^{j}): 1 \leq i \leq m, 1 \leq j \leq n\right\}
\end{split}
\end{equation}
Note that  $\left|E(G)\right|=4mn$.  

\begin{theorem} \label{Wheelmodd}
    The tensor product of Wheel $W_m, m \geq 3$ and star $K_{1,n}$ where $m$ is odd and $n \geq 1$  admits an antimagic labeling.
\end{theorem}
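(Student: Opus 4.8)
The plan is to construct an explicit bijection $f:E(G)\to\{1,2,\dots,4mn\}$ and then check that the four kinds of vertices receive pairwise distinct sums $\phi_f$. The first step is to record the degrees forced by the tensor product: $w_0^0$ has degree $mn$, each rim--centre vertex $w_i^0\in\alpha''$ has degree $3n$, each centre--leaf vertex $w_0^j\in\alpha'''$ has degree $m$, and each rim--leaf vertex $w_i^j\in\alpha'$ has degree $3$. Thus there is a single vertex of degree $mn$, while the sets $\alpha''$, $\alpha'''$, $\alpha'$ carry $m$, $n$ and $mn$ vertices respectively, and the whole problem reduces to separating these families internally and from one another.

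Next I would distribute the labels over the three edge families as consecutive blocks: the smallest $mn$ labels $\{1,\dots,mn\}$ on $\beta'''$, the middle $2mn$ labels on $\beta''$, and the largest $mn$ labels $\{3mn+1,\dots,4mn\}$ on the spokes $\beta'$. Since $\phi_f(w_0^0)$ is exactly the sum of the entire top block, a one-line estimate (even for the tight case $m=3$) shows it exceeds every other sum, so $w_0^0$ is settled at once. Inside each block the labels are laid down by a grid rule indexed by $(i,j)$. For instance, labelling the $\beta'''$-edge $(w_i^0,w_0^j)$ in column-major order by $(j-1)m+i$ gives $\phi_f(w_0^j)=m^2(j-1)+\tfrac{m(m+1)}{2}$, an arithmetic progression in $j$ of common difference $m^2$, so the centre--leaf sums are automatically distinct; analogous grid rules on $\beta''$ and $\beta'$ make $\phi_f(w_i^0)$ and $\phi_f(w_i^j)$ strictly monotone in their indices and hence internally distinct.

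The remaining, and I expect hardest, step is to rule out collisions \emph{between} the three families, whose sum-ranges genuinely overlap for some $m,n$ (a degree-$3$ sum and a degree-$m$ sum can fall in the same interval). Here I would write each family's sums as an explicit interval, e.g. $\alpha'''$ in $[\tfrac{m(m+1)}2,\ m^2(n-1)+\tfrac{m(m+1)}2]$ with similar closed forms for $\alpha''$ and $\alpha'$, dispose of the disjoint cases by comparing endpoints, and in the overlapping cases compare the closed-form expressions directly, using that the progressions carry different common differences so that the rigidity of two arithmetic progressions confines any coincidence to a handful of index pairs that can be excluded by hand. Two features need special attention: the wrap-around edge sets $(w_m^0,w_1^j)$ and $(w_1^0,w_m^j)$ together with the boundary indices $i=1,m$ on the rim, and the hypothesis that $m$ is odd, which is precisely what lets the cyclic labelling pattern on $\beta''$ close up consistently around the rim and keeps the $\alpha''$-sums in their predicted progression. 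Once these seams are verified and the interval comparisons assembled, all $(m+1)(n+1)$ vertex sums are distinct, so $f$ is the required antimagic labelling.
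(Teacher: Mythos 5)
Your setup is sound: the degree computation ($w_0^0$ of degree $mn$, $\alpha''$ of degree $3n$, $\alpha'''$ of degree $m$, $\alpha'$ of degree $3$), the count $|E(G)|=4mn$, the observation that placing the top block $\{3mn+1,\dots,4mn\}$ on the spokes forces $\phi_f(w_0^0)\geq 3m^2n^2$ to dominate all other sums, and the column-major rule on $\beta'''$ giving $\phi_f(w_0^j)=m^2(j-1)+\tfrac{m(m+1)}{2}$ are all correct and would survive scrutiny. But what you have written is a plan, not a proof, and the two pieces you defer are precisely where the entire difficulty lives. First, you never specify the labelings of $\beta''$ and $\beta'$, yet you assert that ``analogous grid rules'' make $\phi_f(w_i^0)$ and $\phi_f(w_i^j)$ strictly monotone. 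For the cyclic family $\alpha''$ strict monotonicity in $i$ is impossible (each $w_i^0$'s rim contribution involves labels from both of its neighbors on an odd cycle, so any linear-in-$i$ rule breaks at the wrap), and for $\alpha'$ you need injectivity in the \emph{pair} $(i,j)$ over $mn$ vertices, which requires the $j$-increments and the $i$-increments to mesh --- in the paper's scheme this comes out as $\phi_f(w_i^j)=(2m+3i-2)n+4j-1$ with the $4j-1\leq 4n-1$ spread fitting under the $6n$ gap between consecutive odd $i$, a quantitative check you have not replaced with anything.

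Second, your cross-family step is not actually finite: the ranges overlap for \emph{infinitely many} $(m,n)$, not a handful. With your blocks, $\alpha'''$ sums reach about $m^2n$ while $\alpha'$ sums begin near $5mn$, so they overlap for all odd $m\geq 7$; and $\alpha''$ sums begin near $2mn^2$ while $\alpha'$ sums reach $10mn$, so these overlap for every $n\leq 4$. ``Comparing closed forms and excluding a handful of index pairs by hand'' therefore needs a uniform invariant valid for all $m,n$, and your construction has none. This is where the paper's route differs essentially from yours: it does not use three clean blocks, but puts $\beta''$ on $\{1,\dots,2mn\}$ and splits the top half $\{2mn+1,\dots,4mn\}$ between the spokes (odd offsets, labels of the form $2mn+(i-1)n+(2j-1)$) and $\beta'''$ (even offsets), with corrections at $i=1,2$ tuned to the parity of $n$. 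The payoff is that vertex sums inherit controlled parities --- $\alpha'$ sums of the form $\cdots+4j-1$ are odd while $\alpha''$ sums $\cdots+2n$ are even (when $n$ is odd), etc. --- and the residual same-parity comparisons reduce to the short list of explicit inequalities the paper records. Until you either exhibit your $\beta''$ and $\beta'$ rules and prove the injectivity claims, or build a parity-type invariant that separates the overlapping families uniformly in $m$ and $n$, the proposal has a genuine gap at its core step.
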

\begin{proof} The labeling of the edges of graph $G$ are given in the following steps. \\
\emph{Step 1: } Labeling the edges of $(w_{0}^{0},w_{i}^{j}) \in \beta^{'}$, for each $i, 1 \leq i \leq m$ and $j, 1 \leq j \leq n$,\\
When $i \neq 1$,
\begin{equation}  \nonumber
f (w_{0}^{0}, w_{i}^{j}) =
    \begin{cases}
       	2mn+(i-1)n+(2j-1)  &,  if \ \text{i is odd}  \\
				3mn+(i-1)n+(2j-1)  &,  if \ \text{i is even} 
    \end{cases}
\end{equation}
When $i=1$, 
\begin{equation}  \nonumber
f (w_{0}^{0}, w_{i}^{j}) =
    \begin{cases}
       	2mn+(2j-1) &,  if \ \text{n is odd }   \\
				2mn+2j  &,  if \ \text{n is even } 
    \end{cases}
\end{equation}
\emph{Step 2: } Labeling the edges of $\beta^{''}$ for each $i, 1 \leq i \leq m$ and $j, 1 \leq j \leq n$,
\begin{align*}
f (w_{m}^{0}, w_{1}^{j})  & = j  \\
f (w_{m}^{j}, w_{1}^{0}) & = mn+j
\end{align*}
For each $i, 1 \leq i \leq m-1$,
\begin{equation}   \nonumber
f (w_{i}^{j}, w_{i+1}^{0}) =
    \begin{cases}
        in+j &, if \ \text{i is even} \\
        mn+in+j &, if \ \text{i is odd}
    \end{cases}
\end{equation}
For each $i, 1 \leq i \leq m-1$,
\begin{equation} \nonumber
f (w_{i}^{0}, w_{i+1}^{j}) =
    \begin{cases}
        in+j &, if \ \text{i is even} \\
        mn+in+j &, if \ \text{i is odd}
    \end{cases}
\end{equation}
\emph{Step 3: } Labeling the edges of $\beta^{'''}$ for each $i, 1 \leq i \leq m$ and $j, 1 \leq j \leq n$, \\
When $i \neq 2$,
\begin{equation}  \nonumber
f (w_{i}^{0}, w_{0}^{j})=
    \begin{cases}
        2mn+(i-2)n+2j &, if \ \text{i is even}   \\
       	3mn+in+2j &, if \ \text{i is odd and} \ i \neq m\\
				 2mn+(i-1)n+2j &, if \ \text{i = m}
    \end{cases}
\end{equation}
When $i=2$, 
\begin{equation}  \nonumber
f (w_{i}^{0}, w_{0}^{j})=
    \begin{cases}
        2mn+2j &, if \ \text{n is odd}  \\
       	2mn+2j-1 &, if \ \text{n is even} 
    \end{cases}
\end{equation}
From the above labeling schemes, we obtained the following vertex sums for every vertex $v \in V(G),$\\
The vertex sum of vertex $(w_{0}^{0})$ is,
\begin{equation}  \nonumber
\phi_f (w_{0}^{0}) = 
    \begin{cases}
        3m^2 n^2 &, \ \text{if n is odd} \\
				3m^2 n^2 + n &, \ \text{if n is even}
    \end{cases}
\end{equation}
For each $i, 1 \leq i \leq m$ and $1 \leq j \leq n$, the vertex sum of the vertex, $w_{i}^{j} \in \alpha^{'}$ is,\\
When $i \neq 1$,
\begin{equation}  \nonumber
\phi_{f} (w_{i}^{j}) =
    \begin{cases}
       (2m+3i-2)n+4j-1  &,  if \ \text{i is odd } \\
				(5m+3i-2)n+4j-1  &,  if \ \text{i is even } 
    \end{cases}
\end{equation}
When $i = 1$,
\begin{equation}  \nonumber
\phi_{f} (w_{i}^{j}) =
    \begin{cases}
       (2m+3i-2)n+4j-1  &,  if \ \text{n is odd }  \\
			 (2m+3i-2)n+4j  &,  if \ \text{n is even } 
    \end{cases}
\end{equation}
For each $i, 1 \leq i \leq m$ and the vertex sum of the vertex, $w_{i}^{0} \in \alpha^{''}$ is,\\
When $i \neq 2$
\begin{equation}  \nonumber
\phi_f (w_{i}^{0}) = 
    \begin{cases}
        (5m+3i+1)n^2 + 2n &, if \ \text{i is odd}, i \neq m \\
       	(2m+3i-1)n^2 + 2n  &,  if \ \text{i is even} \\
				5in^2 +2n &, if \ \text{i = m}
    \end{cases}
\end{equation}
When $i = 2$
\begin{equation}  \nonumber
\phi_f (w_{i}^{0}) = 
    \begin{cases}
       	(2m+3i-1)n^2 + 2n  &,  if \ \text{n is odd} \\
				(2m+3i-1)n^2 + n  &,  if \ \text{n is even} 
    \end{cases}
\end{equation}
Finally, the vertex sum of the vertex, $w_{0}^{j} \in \alpha^{'''}$ for each $j, \ 1 \leq j \leq n$ is calculated as,
\begin{equation}  \nonumber
\phi_f (w_{0}^{j}) =
    \begin{cases}
       	(3mn+2j-n)m  &,  if \ \text{n is odd} \\
				(3mn+2j-n)m - 1  &,  if \ \text{n is even} 
    \end{cases}
\end{equation}
From the above vertex sums, we have the following observations,\\
(1) From the vertex of $w_{i}^{j} \in \alpha^{'}$ for each $i, 1 \leq i \leq m$. We have, 
\begin{equation} \nonumber
\phi_{f} (w_{1}^{j}) < \phi_{f} (w_{3}^{j}) < \cdots <  \phi_{f} (w_{m}^{j}) <  \phi_{f} (w_{2}^{j})  < \phi_{f} (w_{4}^{j}) <  
\cdots < \phi_{f} (w_{m-1}^{j})  
\end{equation}
further, for each $i, 1 \leq i \leq m$ there exist $j_{1}, j_{2}, 1 \leq j_{1}, j_{2} \leq n$ such that if $j_{1} < j_{2}$ we have $\phi_f (w_{i}^{j_{1}}) < \phi_f (w_{i}^{j_{2}})$.\\
(2) From the vertex of $w_{i}^{0} \in \alpha^{''}$ for each $i, 1 \leq i \leq m$. We have,
\begin{equation} \nonumber
\phi_{f} (w_{2}^{0}) < \phi_{f} (w_{4}^{0}) < \cdots < \phi_{f} (w_{m-1}^{0}) < \phi_{f} (w_{m}^{0}) < \phi_{f} (w_{1}^{0}) < \phi_{f} (w_{3}^{0})  < \cdots < \phi_{f} (w_{m-2}^{0})
\end{equation}
(3) From the vertex of $w_{0}^{j} \in \alpha^{'''}$ for each $j, 1 \leq j \leq n$. We have,
\begin{equation} \nonumber
\phi_f (w_{0}^{1}) < \phi_f (w_{0}^{2}) < \cdots < \phi_f (w_{0}^{n})
\end{equation}
Therefore, it is clear that no two vertices belongs to the same set of vertices, $(\alpha^{'}, \alpha^{''}, \alpha^{'''})$ are equal. And from the above observations and the defined vertex sums, we have the following:
\begin{itemize}
	\item The vertex sum of the vertex $w_{0}^{0}$ is greater than all the other vertex sums.
	\item If the vertex having a odd sum then we have, \\
	When $n$ is odd, $\phi_{f} (w_{3}^{0}) < \phi_{f} (w_{0}^{0}) $.\\
	When $n$ is even, $\phi_{f} (w_{m-1}^{n}) < \phi_{f} (w_{0}^{1}) $.
\item If the vertex having a even sum then we have, \\
	When $n$ is odd, $\phi_{f} (w_{m-1}^{n}) < \phi_{f} (w_{2}^{0}) $.\\
	When $n$ is even, $\phi_{f} (w_{1}^{1}) < \phi_{f} (w_{2}^{0}) $.
\end{itemize}
Hence, for any two distinct vertices in $G$ receives distinct vertex sum. Therefore, $G$ is antimagic.
\end{proof}
\begin{theorem} \label{Wheelmeven}
    The tensor product of Wheel $W_m, m \geq 3$ and star $K_{1,n}$ where $m$ is even and $n \geq 1$  admits an antimagic labeling.
\end{theorem}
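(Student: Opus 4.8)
The plan is to mirror the explicit construction used for the odd case in Theorem~\ref{Wheelmodd}, adapting the three-step edge labelling to even $m$ and then re-deriving the four families of vertex sums. As before I would reserve the low block of labels $\{1,2,\dots,2mn\}$ for the $2mn$ cycle-type edges of $\beta''$ and the high block $\{2mn+1,\dots,4mn\}$ for the $mn$ hub edges of $\beta'$ together with the $mn$ spoke edges of $\beta'''$, using the parity of a label value to separate $\beta'$ (odd labels) from $\beta'''$ (even labels) and the parity of the rim index $i$ to decide whether a label lands in the lower half $[2mn+1,3mn]$ or the upper half $[3mn+1,4mn]$ of the high block. The decisive structural difference from the odd case is at the seam of the rim cycle $u_1 u_2\cdots u_m u_1$: when $m$ is odd the alternating parity pattern on $C_m$ fails to close up (both $u_1$ and $u_m$ are odd), which is exactly why Theorem~\ref{Wheelmodd} needed the ad hoc exceptions at $i\in\{1,2,m\}$; when $m$ is even the alternation closes up properly (now $u_m$ is even while $u_1$ is odd), so I expect the uniform ``$i$ odd / $i$ even'' rule to extend cleanly around the whole cycle, simplifying or removing those exceptions while the case boundaries involving $i=m$ shift to the even branch. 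Connectivity is automatic throughout, since $W_m$ always carries the odd triangles $u_0 u_i u_{i+1}$, so Weichsel's criterion keeps $G=W_m\times K_{1,n}$ connected.

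After fixing the labelling I would compute $\phi_f(w_0^0)$ (the hub is incident to every edge of $\beta'$, so its sum is quadratic in both $m$ and $n$ and is the unique global maximum), the sums $\phi_f(w_i^j)$ for $w_i^j\in\alpha'$ (each incident to one $\beta'$ edge and two $\beta''$ edges, giving a sum of order $mn$ linear in $j$), the sums $\phi_f(w_i^0)$ for $w_i^0\in\alpha''$ (incident to $2n$ edges of $\beta''$ and $n$ edges of $\beta'''$, giving a sum of order $mn^2$), and the sums $\phi_f(w_0^j)$ for $w_0^j\in\alpha'''$ (incident to $m$ edges of $\beta'''$, of order $m^2 n$). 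I would then establish a strict monotone chain inside each of the three classes $\alpha',\alpha'',\alpha'''$, interleaving the odd-index and even-index vertices as in observations (1)--(3) of the odd proof and breaking ties by $j$, so that no two vertices lying in the same class can collide.

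The remaining work, and the main obstacle, is ruling out collisions between different classes. Here magnitude alone does not suffice, since the $\alpha''$ sums (order $mn^2$) and the $\alpha'''$ sums (order $m^2 n$) interleave depending on the relative sizes of $m$ and $n$; the genuine separation is by the parity of the vertex sum. As in the odd proof I would split all non-hub vertices into an odd-sum chain and an even-sum chain, check that the monotone orderings refine to a strict total order within each parity class, and then verify the small number of boundary inequalities where the top of one chain meets the bottom of another, the analogues of the bulleted comparisons $\phi_f(w_3^0)<\phi_f(w_0^0)$ and $\phi_f(w_{m-1}^n)<\phi_f(w_0^1)$. The delicate points are the $n$-odd versus $n$-even subcase, which shifts several sums by $\pm 1$ through the seam labels, and the smallest value $n=1$, where $K_{1,n}=P_2$ and the quadratic and linear scales coincide; there the boundary inequalities have to be checked by hand. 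Once these finitely many inequalities are confirmed, every pair of distinct vertices of $G$ receives a distinct sum, and hence $G$ is antimagic.
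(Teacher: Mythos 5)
Your submission is a strategy outline rather than a proof: it never writes down the labeling on $\beta'$, $\beta''$, $\beta'''$ for even $m$, never computes the resulting vertex sums, and explicitly defers the decisive step (``Once these finitely many inequalities are confirmed\ldots''). For this theorem the entire mathematical content lives in producing an explicit labeling whose sums can be totally ordered, so a plan that postpones both the construction and the verification has a genuine gap --- there is nothing here one could check for correctness.

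More importantly, the one concrete structural prediction you make points in the wrong direction. You expect that for even $m$ the uniform ``$i$ odd / $i$ even'' rule ``extends cleanly around the whole cycle, simplifying or removing those exceptions.'' What actually happens in the paper's proof is the opposite: the even case requires a \emph{more} intricate scheme, not a simpler one. The rim edges are labelled by a mirrored pattern ($f(w_i^j,w_{i+1}^0)=in+j$ for $i$ odd but $n(2m-i)+j$ for $i$ even, and dually for $(w_i^0,w_{i+1}^j)$), which makes the $\alpha''$ sums increase from one side of the rim and decrease from the other; as a consequence the spoke edges $(w_i^0,w_0^j)\in\beta'''$ need a six-way case split with thresholds at $2\left\lfloor m/4\right\rfloor$ and $2\left\lceil m/4\right\rceil$ to restore a strict interleaved ordering of the $\phi_f(w_i^0)$ (see observation (2) in the paper's proof of Theorem~\ref{Wheelmeven}, whose chain wraps through $w_{2\lfloor m/4\rfloor}^0$, $w_m^0$, $w_{2\lceil m/4\rceil+1}^0$, $w_1^0$, etc.). Moreover, your diagnosis of the odd-case exceptions is off: the special labels at $i=1$ (in $\beta'$) and $i=2$ (in $\beta'''$) in Theorem~\ref{Wheelmodd} are governed by the parity of $n$ --- they swap an odd label $2mn+2j-1$ with an even label $2mn+2j$ to control the parity class of the affected sums --- not by the failure of the alternation to close at the seam $u_m u_1$. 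Accordingly they do \emph{not} disappear when $m$ is even; the paper's even-$m$ labeling retains exactly the same $n$-parity exceptions at $i=1$ and $i=2$. Since your proposal both omits the construction and rests on an expectation that the actual construction contradicts, it cannot be accepted as a proof; to repair it you would have to exhibit an explicit even-$m$ labeling and carry out the ordering and parity-boundary checks in full.
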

\begin{proof}
The labeling of the edges of graph $G$ are given in the following steps. \\
\emph{Step 1: } Labeling the edges of $(w_{0}^{0},w_{i}^{j}) \in \alpha^{'}$ for each $i, 1 \leq i \leq m$ and $j, 1 \leq j \leq n$,\\
When $i \neq 1$,
\begin{equation} \nonumber
f (w_{0}^{0}, w_{i}^{j}) =
    \begin{cases}
       	(2m+i-1)n+2j-1  &,  if \ \text{i is odd for} \ j, 1 \leq j \leq n  \\
	(4m+1-i)n-5+2j  &,  if \ \text{i is even for} \ j, 1 \leq j \leq n 
    \end{cases}
\end{equation}
When $i = 1$,
\begin{equation} \nonumber
f (w_{0}^{0}, w_{i}^{j}) =
    \begin{cases}
       	2mn+(2j-1)  &,  if \ \text{n is odd} \\
	2mn+2j  &,  if \ \text{n is even} 
    \end{cases}
\end{equation}
\emph{Step 2: } Labeling the edges of $\beta^{''}$ for each $i, 1 \leq i \leq m$ and $j, 1 \leq j \leq n$,
\begin{align*}
f \left(w_{m}^{0},w_{1}^{j}\right) &= j \\
f \left(w_{1}^{0},w_{m}^{j}\right) &= mn+j
\end{align*}
For each $i,$ $1 \leq i \leq m-1$ and $1 \leq j \leq n$
\begin{equation} \nonumber
f (w_{i}^{j}, w_{i+1}^{0}) =
    \begin{cases}
       	in+j  &,  if \ \text{i is odd } \\
				n(2m-i)+j  &,  if \ \text{i is even}
    \end{cases}
\end{equation}
For each $i,$ $1 \leq i \leq m-1$ and $1 \leq j \leq n$
\begin{equation} \nonumber
f (w_{i}^{0}, w_{i+1}^{j}) =
    \begin{cases}
		n(2m-i)+j  &,  if \ \text{i is odd} \\
       	in+j  &, if \ \text{i is even } 
    \end{cases}
\end{equation}
\emph{Step 3: } Labeling the edges of $\beta^{'''}$ for each $i, 1 \leq i \leq m$ and $j, 1 \leq j \leq n$, \\
When $i = 2$,\\
\begin{equation} \nonumber
f (w_{i}^{0}, w_{0}^{j}) = 
    \begin{cases}
        2mn+2j &, if \ \text{n is odd} \\
       	2mn+2j-1  &, if \ \text{n is even}
    \end{cases}
\end{equation}
 When $i \neq 2$,
\begin{equation}  \nonumber
f (w_{i}^{0}, w_{0}^{j}) = 
    \begin{cases}
        n(2m+i-2)+2j &, if \ 4 \leq i \leq  2\left\lfloor \dfrac{m}{4}\right\rfloor \ \& \ i \ even\\
       	n(2m+2\left\lfloor \dfrac{m}{4}\right\rfloor) +2j &, if \ i = m \\
				n(2m+i)+2j &, if \ 2\left\lfloor \dfrac{m}{4}\right\rfloor + 2 \leq i \leq m-2  \ \& \ i  \ even \\
				n(4m-1-i)+2j &, if \ 2 \left\lceil \dfrac{m}{4}\right\rceil+1 \leq i \leq m-1  \ \& \ i \ odd\\
				n(4m-2\left\lceil \dfrac{m}{4}\right\rceil) + 2j &, if \ i = 1  \\
				n(4m+1-i)+2j &, if \ 3 \leq i \leq 2\left\lceil \dfrac{m}{4}\right\rceil -1  \ \& \ i \ odd \\
    \end{cases}
\end{equation}
From the above labeling schemes, we obtained the following vertex sums for every vertex $v \in V(G).$ The vertex sum of vertex $(w_{0}^{0})$ is,
\begin{equation} \nonumber
\phi_{f} (w_{0}^{0}) = 
    \begin{cases}
        3m^2 n^2 &, if \ n \ is \  odd\\
       	3m^2 n^2 + n &, if \ n \ is \ even
    \end{cases}
\end{equation}
For each $i, 1 \leq i \leq m$ and $1 \leq j \leq n$, the vertex sum of the vertex $w_{i}^{j} \in \alpha^{'}$ is,\\
When $i \neq 1$,
\begin{equation} \nonumber
\phi_{f} (w_{i}^{j}) = 
    \begin{cases}
  (2m+3i-2)n+4j-1 &, if \ i \ is \ odd\\
  (8m-3i+2)n+4j-5 &, if \ i  \ is  \ even
    \end{cases}
\end{equation}
When $i = 1$,
\begin{equation} \nonumber
\phi_{f} (w_{i}^{j}) = 
    \begin{cases}
   (2m+3i-2)n+4j-1 &, if \ \text{n is odd} \\
    (2m+3i-2)n+4j &, if \ \text{n is even}
    \end{cases}
\end{equation}
For each $i, 1 \leq i \leq m$ and the vertex sum of the vertex $w_{i}^{0} \in \alpha^{''}$ is,\\
When $i \neq 2$,
\begin{equation} \nonumber
\phi_{f} (w_{i}^{0}) = 
    \begin{cases}
        n^2 (3i+2m-1)+2n &, if \ 4 \leq i \leq  2\left\lfloor \dfrac{m}{4}\right\rfloor \ for \ i \ even \\
       	n^2 (3m+1+2\left\lfloor \dfrac{m}{4}\right\rfloor)+2n &, if \ i = m \\
				n^2 (3i+2m+1)+2n &, if \ 2\left\lfloor \dfrac{m}{4}\right\rfloor + 2 \leq i \leq m-2 \ for \ i \ even \\
				n^2 (8m-3i+2)+2n &, if \ m-1  \leq i \leq 2 \left\lceil \dfrac{m}{4}\right\rceil-1 \ for \ i \ odd \\
				n^2 (7m+1-2\left\lceil \dfrac{m}{4}\right\rceil)+2n &, if \ i = 1 \\
				n^2 (8m-3i+4)+2n &, if \ 3 \leq i \leq 2\left\lceil \dfrac{m}{4}\right\rceil -1 \ for \ i \ odd
    \end{cases}
\end{equation}
When $i = 2$,
\begin{equation} \nonumber
\phi_{f} (w_{i}^{0}) = 
    \begin{cases}
        n^2 (2m+5)+2n &, if \ \text{n is odd} \\
  n^2 (2m+5)+n &, if \ \text{n is even} \\
    \end{cases}
\end{equation}
Finally, the vertex sum of the vertex $w_{0}^{j} \in \alpha^{'''}$ for each $j, \ 1 \leq j \leq n$ is calculated as,
\begin{equation} \nonumber
\phi_{f} (w_{0}^{j}) = 
    \begin{cases}
        (3mn+2j-n)m &, if \ \text{n is odd} \\
  (3mn+2j-n)m-1 &, if \ \text{n is even}
    \end{cases}
\end{equation}
From the above vertex sums, we have the following observations,\\
(1) From the vertex of $w_{i}^{j} \in \alpha^{'}$. We have, 
\begin{equation} \nonumber
\phi_{f} (w_{1}^{j}) <  \phi_{f} (w_{3}^{j})  < \cdots < \phi_{f} (w_{m-1}^{j}) < \phi_{f} (w_{m}^{j}) < \phi_{f} (w_{m-2}^{j})  < \cdots < \phi_{f} (w_{4}^{j}) < \phi_{f} (w_{2}^{j})  
\end{equation}
further, for each $i, 1 \leq i \leq m$ there exist $j_{1}, j_{2}, 1 \leq j_{1}, j_{2} \leq n$ such that if $j_{1} < j_{2}$ we have $\phi_f (w_{i}^{j_{1}}) < \phi_f (w_{i}^{j_{2}})$.\\
(2) From the vertex of $w_{i}^{0} \in \alpha^{''}$ for each $i, 1 \leq i \leq m$. We have,
\begin{multline} \nonumber
\phi_f (w_{2}^{0}) < \phi_f (w_{4}^{0}) < \cdots < \phi_f (w_{2\left\lfloor \dfrac{m}{4}\right\rfloor}^{0}) < \phi_f (w_{m}^{0}) < \phi_f (w_{2\left\lfloor \dfrac{m}{4}\right\rfloor + 2}^{0}) <  \cdots <\\ \phi_f (w_{m-2}^{0}) < \phi_f (w_{m-1}^{0})  < \cdots < \phi_f (w_{2\left\lceil \dfrac{m}{4}\right\rceil+1}^{0})< \phi_f (w_{1}^{0}) < \phi_f (w_{2\left\lceil \dfrac{m}{4}\right\rceil}^{0})\\ < \cdots < \phi_f (w_{3}^{0})
\end{multline}
(3) From the vertex of $w_{0}^{j} \in \alpha^{'''}$ for each $j, 1 \leq j \leq n$. We have,
\begin{equation}  \nonumber
\phi_f (w_{0}^{1}) < \phi_f (w_{0}^{2}) < \cdots < \phi_f (w_{0}^{n})
\end{equation}
Therefore, it is clear that no two vertices belongs to the same set of vertices are equal. And from the above observations and also from the defined vertex sums, we have the following:
\begin{itemize}
	\item The vertex sum of the vertex $w_{0}^{0}$ is greater than all the other vertex sums.
	\item If the vertex having a odd sum then we have, \\
	When $n$ is odd, $\phi_{f} (w_{3}^{0}) < \phi_{f} (w_{0}^{0}) $.\\
	When $n$ is even, $\phi_{f} (w_{2}^{n}) < \phi_{f} (w_{0}^{1}) $.
\item If the vertex having a even sum then we have, \\
	When $n$ is odd, $\phi_{f} (w_{3}^{n}) < \phi_{f} (w_{0}^{1}) $.\\
	When $n$ is even, $\phi_{f} (w_{1}^{n}) < \phi_{f} (w_{2}^{0}) $.
\end{itemize}
Hence, for any two distinct vertices we obtained distinct vertex sum. Therefore, $G$ is antimagic.
\end{proof}
From theorem \ref{Wheelmodd} and \ref{Wheelmeven}, we obtain the following theorem.
\begin{theorem} \label{Wheelmain}
    The tensor product of Wheel $W_m, m \geq 3$ and star $K_{1,n}, n \geq 1$  admits an antimagic labeling.
\end{theorem}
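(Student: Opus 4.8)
The plan is to dispose of Theorem~\ref{Wheelmain} by a straightforward parity dichotomy on $m$, since the two preceding theorems already supply explicit antimagic labelings for each parity class. Every integer $m \geq 3$ is either odd or even, so the two cases below are exhaustive and mutually exclusive, and in each case the desired conclusion is already in hand; the whole argument is therefore a bookkeeping combination rather than a fresh construction.

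First I would treat the case where $m$ is odd. Here Theorem~\ref{Wheelmodd} applies verbatim: for every $n \geq 1$ it produces a bijection $f : E(W_m \times K_{1,n}) \to \{1, 2, \ldots, 4mn\}$ whose induced vertex sums are pairwise distinct, so $W_m \times K_{1,n}$ is antimagic. Second, when $m$ is even, Theorem~\ref{Wheelmeven} supplies the corresponding edge labeling together with its distinct-vertex-sum verification for every $n \geq 1$, again yielding that $W_m \times K_{1,n}$ is antimagic. Combining the two cases, for each $m \geq 3$ and each $n \geq 1$ the graph $W_m \times K_{1,n}$ admits an antimagic labeling, which is precisely the assertion of the theorem.

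The one point I would be careful to confirm is that the two ranges genuinely cover the full hypothesis without a gap: since \emph{odd} and \emph{even} partition the integers $m \geq 3$, and both subtheorems are stated under the same side condition $n \geq 1$, there is no residual value of the pair $(m,n)$ left unaddressed and no contradictory overlap to reconcile. Consequently I do not expect any substantive obstacle at this stage; all of the real work — exhibiting the edge labelings in Steps~1--3, deriving the closed-form vertex sums, and checking their distinctness through the ordering observations — has already been carried out inside Theorems~\ref{Wheelmodd} and~\ref{Wheelmeven}, and the present statement is merely their union.
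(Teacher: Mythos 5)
Your proposal is correct and matches the paper exactly: the paper derives Theorem~\ref{Wheelmain} as an immediate consequence of Theorems~\ref{Wheelmodd} and~\ref{Wheelmeven} via the same parity dichotomy on $m$, with all constructive work residing in those two results. No gap remains, since odd and even values of $m \geq 3$ exhaust the hypothesis and both subtheorems cover all $n \geq 1$.
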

\subsection{Antimagicness for tensor product of helm and star}
\noindent Let the vertex set of the tensor product of $H_{m} \times K_{1,n} = G$ is,
\begin{equation} \nonumber
V(G) = \left\{(u_i, v_j) = w_{i}^{j}: 0 \leq i \leq 2m,  0 \leq j \leq n\right\}
\end{equation}
Note that, the tensor product of wheel and star is the induced subgraph of the tensor product of helm and star. So, we can write the vertex set of $G$ can be written using the vertex set of tensor product of wheel and star, 
\begin{equation} \nonumber
V(G) = \left\{w_{0}^{0}\right\} \cup \gamma^{'} \cup  \gamma^{''} \cup  \gamma^{'''}
\end{equation}
where, $ \gamma^{'} = \alpha^{'} \cup \left\{w_{i}^{j}: m+1 \leq i \leq 2m, 1\leq j \leq n\right\},
\gamma^{''} = \alpha^{''} \cup \left\{w_{m+i}^{0}: 1 \leq i \leq m \right\}$ and 
$\gamma^{'''} = \alpha^{'''}$
where $\alpha^{'}, \alpha^{''}$ and $\alpha^{'''}$ are defined in section $3.1$.\\
The edge set of $G$ is defined as,
\begin{align*} 
E(G) = \delta^{'} \cup \delta^{''} \cup \delta^{'''}
\end{align*}
where, $\delta^{'} = \beta^{'}, \delta^{'''}  = \beta^{'''} $ and $ \delta^{''}=\beta^{''} \cup \left\{(w_{i}^{j}, w_{m+i}^{0}): 1 \leq i \leq m, 1 \leq j \leq n\right\}
\cup \\ \left\{(w_{m+i}^{j}, w_{i}^{0}): 1 \leq i \leq m, 1 \leq j \leq n\right\}$. \\
where $\beta^{'}$ and $\beta^{'''}$ are defined in section $3.1.$ Note that $\left|E(G)\right| = 6mn.$
Before, proving the main theorem we have the following lemma.
\begin{lemma} \label{Helmn=1}
The graph $G= H_{m} \times K_{1,1}$ is antimagic for $m \geq 3.$
\end{lemma}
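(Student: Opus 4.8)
The plan is to exploit the fact that, since $K_{1,1}=K_2$, the graph $G=H_m\times K_{1,1}$ is the bipartite double cover of the helm $H_m$; it is connected because $H_m$ contains the odd triangles $u_0u_iu_{i+1}$, so by Weichsel's theorem the product does not split. I would then exhibit an explicit bijection $f:E(G)\to\{1,2,\dots,6m\}$ and check that the $4m+2$ vertex sums are pairwise distinct. The vertices fall into three degree classes: the two central copies $w_0^0,w_0^1$ of degree $m$; the $2m$ rim copies $w_i^0,w_i^1$ ($1\le i\le m$) of degree $4$; and the $2m$ leaf copies $w_{m+i}^0,w_{m+i}^1$ ($1\le i\le m$) of degree $1$. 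Correspondingly $E(G)$ splits into $2m$ spoke edges (central-to-rim), $2m$ rim-cycle edges (the lift of $u_1\cdots u_m$), and $2m$ pendant edges (rim-to-leaf).

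First I would label the pendant edges, which is the easiest part: give them the $2m$ smallest labels $1,2,\dots,2m$. Since each leaf has degree $1$, its vertex sum is exactly the label of its pendant edge, so the leaf sums are precisely $1,2,\dots,2m$ and occupy a clean bottom band. Next I would distribute the remaining labels $2m+1,\dots,6m$ over the cycle and spoke edges, mimicking the alternating-parity pattern from the proofs of Theorems~\ref{Wheelmodd} and~\ref{Wheelmeven}: the spokes feeding $w_0^1$ take the block $\{2m+1,\dots,3m\}$ and those feeding $w_0^0$ take $\{3m+1,\dots,4m\}$, so the two central sums are $(5m^2+m)/2$ and $(7m^2+m)/2$, differing by the gap $m^2$; the cycle edges take the top block $\{4m+1,\dots,6m\}$ in the alternating order that makes the sequence of rim sums strictly increasing around the cover. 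Each rim sum is then one pendant label plus one spoke label plus two cycle labels; because every rim vertex sees three labels $\ge 2m+1$, every rim sum exceeds $2m$ and is thus separated from the leaf band, while the alternating order keeps the rim sums mutually distinct, exactly as for the wheel.

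The one genuinely delicate point is the separation between the degree-$4$ rim sums and the degree-$m$ central sums, and this is where I expect the main work to lie. For $m\ge 7$ the smaller central sum $(5m^2+m)/2$ already exceeds the largest possible rim sum $18m-1$ (equivalently $5m^2-35m+2>0$), so distinctness is automatic by a quadratic-versus-linear margin; but for the small values $m\in\{3,4,5,6\}$ the central degree is comparable to $4$ and the two ranges overlap, so one cannot separate the classes by degree alone, which is presumably why the case $n=1$ is isolated as a lemma. I would resolve this as in the wheel proofs: record all four families of sums in closed form, read off the increasing chain within each class, and then verify the finitely many boundary comparisons (the largest rim sum against the smaller central sum) directly for $m\in\{3,4,5,6\}$, adjusting the assignment of the pendant labels $1,\dots,2m$ to rim vertices so that no rim sum ever meets a central sum. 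Finally I would note the quick alternative route: since $|V(H_m)|-1=2m\le 3m=|E(H_m)|$ and $H_m$ has pendant vertices of odd degree, $H_m$ satisfies both Liang--Zhu conditions, so \cite{LiangTCS} already yields that $H_m\times K_2=G$ is antimagic.
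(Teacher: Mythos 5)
Your main route is, in outline, the same as the paper's: the paper also gives the $2m$ pendant edges exactly the labels $\{1,\dots,2m\}$ (odds to the edges $(w_i^1,w_{m+i}^0)$, evens to $(w_i^0,w_{m+i}^1)$), so that the degree-one leaves occupy a clean bottom band, then splits the remaining labels into blocks for the two spoke families and the lifted rim cycle (the paper uses even labels $2m+2,\dots,4m$ at $w_0^0$, even labels near the top at $w_0^1$, and odd labels for the cycle edges, exploiting parity of the sums as an extra separator where you use disjoint blocks), and finishes with increasing chains plus a finite case check at small $m$ against the central sums. Your arithmetic is sound: the central sums $(5m^2+m)/2$ and $(7m^2+m)/2$, the rim bound $18m-1$, and the threshold $5m^2-35m+2>0$ for $m\ge 7$ are all correct, and your small list $m\in\{3,4,5,6\}$ is in fact more careful than the paper's own enumeration, which jumps from $m=5$ to $m\ge 7$.

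There are, however, two genuine problems. The decisive one is your ``quick alternative route'' via \cite{LiangTCS}: that theorem concerns the \emph{Cartesian} product, i.e.\ the prism $H_m\,\square\,K_2$, which consists of two copies of $H_m$ joined by a perfect matching and has $2|E(H_m)|+|V(H_m)|=8m+1$ edges. The graph of this lemma is the tensor product $H_m\times K_2$, the bipartite double cover, which is bipartite and has only $6m$ edges; the two are not isomorphic, so verifying the Liang--Zhu hypotheses for $H_m$ (which you do correctly) proves nothing about $G$. The introduction's notation ``$G_2\times K_2$'' invites exactly this misreading, but the citation cannot replace the lemma, and if it could, the lemma would not need a separate proof in the paper at all.

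The second problem is that the load-bearing step of your own construction is asserted rather than built. ``The alternating order that makes the rim sums strictly increasing around the cover'' does not exist off the shelf: if you label a lifted $2m$-cycle consecutively with $4m+1,\dots,6m$, the wrap-around vertex receives cycle-sum $(4m+1)+6m=10m+1$, which collides with the vertex at position $m+1$; more importantly, for even $m$ the double cover of the rim $C_m$ is \emph{two disjoint} $m$-cycles, so there is no single order ``around the cover,'' and the construction (with its two wrap points) must be specified separately for the two parities of $m$ --- which is precisely why the paper's formulas are riddled with cases in $\lceil m/2\rceil$-type boundaries. The pendant and spoke labels do give you enough freedom to break all such ties, and your band structure guarantees the leaf/rim/center separations you claim, so the plan is repairable; but as written, rim-sum distinctness and the $m\in\{3,4,5,6\}$ adjustments are promissory notes, and in an antimagic-labeling lemma that bookkeeping \emph{is} the proof.
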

\begin{proof}
We label the edges of the graph $G$ by using the function, $f_{1}: E(G) \rightarrow \left\{1,2, \cdots, 6m\right\}$. \\
The edges of $\delta^{'}$ is labelled as,
\begin{align*} \nonumber
f_{1} (w_{0}^{0}, w_{i}^{1})=& 2m+2i\ for \ i= 1,2, \cdots, m
\end{align*}
The edges of $\delta^{''}$ is labelled as,
\begin{align*} \nonumber
f_{1} (w_{i}^{1}, w_{i+1}^{0})=&
     \begin{cases}
        2m+1 &, if \ i = 1   \\
        2m+4i-1 &, if \ 2 \leq i \leq m-1, 
    \end{cases}\\
		f_{1} (w_{1}^{1}, w_{m}^{0})=& 2m+3, \\
		f_{1} (w_{m}^{1}, w_{1}^{0})=& 3(2m-1), \\
		f_{1} (w_{i}^{0}, w_{i+1}^{1})=&
     \begin{cases}
        2m+4i+1 &, if \ 1 \leq i \leq m-2   \\
        6m-1 &, if \ i = m-1, 
    \end{cases}\\
		f_{1} (w_{i}^{1}, w_{m+i}^{0})=& 2i-1, \ \text{for} \ 1 \leq i \leq  m \\
			f_{1} (w_{i}^{0}, w_{m+i}^{1})=&
     \begin{cases}
		m+2 &, if \ i = 1 \ \& \ m \ \text{is even}  \\
				m+3 &, if \ i = 1 \ \& \ m \ \text{is odd}  \\
		 2(i-1) &, if \ 2 \leq i \leq \left\lceil \dfrac{m-1}{2}\right\rceil   \\
		m+1 &, if \ i = \dfrac{m+1}{2} \ \& \ m \ \text{is odd}  \\
		2(i+1) &, if \ \left\lfloor \dfrac{m+3}{2}\right\rfloor \leq i \leq m-1  \\
       m-1 &, if \ i = m \ \& \ m \ \text{is odd}  \\
				m &, if \ i = m \ \& \ m \ \text{is even} 
				\end{cases}
\end{align*}
The edges of $\delta^{'''}$ is labelled as,
\begin{align*}
f_{1} (w_{i}^{0}, w_{0}^{1})=
\begin{cases}
5m+2 &, if \ i = 1 \ \& \ m \ \text{is even}  \\
				5m+3 &, if \ i = 1 \ \& \ m \ \text{is odd}  \\
4m+2(i-1) &, if \ 2 \leq i \leq \left\lceil \dfrac{m-1}{2}\right\rceil   \\
5m+1 &, if \ i = \dfrac{m+1}{2} \ \& \ m \ \text{is odd}  \\
2i+4m+2 &, if \ \left\lfloor \dfrac{m+3}{2}\right\rfloor \leq i \leq m-1  \\
        5m-1 &, if \ i = m \ \& \ m \ \text{is odd}  \\
				5m &, if \ i = m \ \& \ m \ \text{is even}  			
    \end{cases}
\end{align*}
The edge labels induces the vertex sums of $G$ as follows,
\begin{align*}
\phi_{f_{1}} (w_{0}^{0}) =& 3m^{2} + m, \\
\phi_{f_{1}} (w_{i}^{1}) =& 6m+12i-5 \ \text{for} \ 1 \leq i \leq m, \\
\phi_{f_{1}} (w_{m+i}^{1}) =& f(w_{i}^{0},w_{m+i}^{1}),\\ 
\phi_{f_{1}} (w_{i}^{0} ) =&
\begin{cases}
14m+8 &,  if \ i= 1 \ \& \ m \ \text{is odd} \\ 
				14m+6 &,  if \ i= 1 \ \& \ m  \ \text{is even} \\
8m+14 &, if \ i=2   \\
8m+12i-8 &, if \ 3 \leq i \leq \left\lceil \dfrac{m-1}{2}\right\rceil  \\
9m+8i-3 &, if \ i = \dfrac{m+1}{2} \  \& m \ \text{is odd} \\
        8m+12i &, for \left\lfloor \dfrac{m+3}{2}\right\rfloor \leq i \leq m-2 \\
					10(2m-1) &, if \ i=m-1 \\
				14m-4 &, if \ i= m \ \& \ m  \ \text{is odd} \\ 
				14m-2 &, if \ i= m \ \& \ m  \ \text{is even} 		
    \end{cases}\\
		\phi_{f_{1}} (w_{m+i}^{0} ) =& f(w_{i}^{1}, w_{m+i}^{0})\\
 \phi_{f_{1}} (w_{0}^{1} ) =&	 5m^{2} + m	
\end{align*}
Thus, we have the following observation:\\
(1). For each $i, 1 \leq i \leq m$, we have 	$\phi_{f_{1}} (w_{1}^{1}) < \phi_{f_{1}} (w_{2}^{1}) < \cdots < \phi_{f_{1}} (w_{m}^{1}) $ and
	\begin{itemize}
		\item If $m$ is odd, \\
	$\phi_{f_{1}} (w_{2}^{0}) < \phi_{f_{1}} (w_{3}^{0}) < \cdots < \phi_{f_{1}} (w_{\left\lceil \dfrac{m-1}{2}\right\rceil}^{0} ) < \phi_{f_{1}} (w_{m}^{0}) < \phi_{f_{1}}  (w_{\dfrac{m+1}{2}}^{0}) < \phi_{f_{1}} (w_{1}^{0}) < \phi_{f_{1}} (w_{\left\lfloor \dfrac{m+3}{2}\right\rfloor}^{0}) < \phi_{f_{1}} (w_{\left\lfloor \dfrac{m+5}{2}\right\rfloor}^{0})  <  \cdots < \phi_{f_{1}} (w_{m-1}^{0})$.
	\item 	If $m$ is even, \\
	$\phi_{f_{1}} (w_{2}^{0} ) < \phi_{f_{1}} (w_{3}^{0}) < \cdots < \phi_{f_{1}} (w_{\left\lceil \dfrac{m-1}{2}\right\rceil}^{0} ) < \phi_{f_{1}} (w_{m}^{0}) < \phi_{f_{1}} ( w_{1}^{0}) < \phi_{f_{1}} (w_{\left\lfloor \dfrac{m+3}{2}\right\rfloor}^{0})  < \phi_{f_{1}} (w_{\left\lfloor \dfrac{m+5}{2}\right\rfloor}^{0})  < \cdots < \phi_{f_{1}} (w_{m-1}^{0})$.
	\end{itemize}
Clearly, the above vertex sums are distinct due to the following reasoning:
\begin{enumerate}
	\item The one degree vertices is lesser from all the other vertices so it is different from every other vertices and it is different between themselves.
	\item The vertex $w_{i}^{1}$ for $i = 1,2, \cdots, m$ is having a odd sum so it is different from all the other vertices, since the remaining vertices are having a even sums. 
	\item For the remaining vertices we have,\\
	If $m=3, \phi_{f_{1}} (w_{0}^{0}) < \phi_{f_{1}} (w_{3}^{0}) < \phi_{f_{1}} (w_{2}^{0}) < \phi_{f_{1}} (w_{0}^{1}) < \phi_{f_{1}} (w_{1}^{0}) $.\\
	If $m=4, \phi_{f_{1}} (w_{2 }^{0}) < \phi_{f_{1}} (w_{0}^{0}) < \phi_{f_{1}} (w_{m}^{0}) < \phi_{f_{1}} (w_{1}^{0}) < \phi_{f_{1}} (w_{3}^{0}) < \phi_{f_{1}} (w_{0}^{1}) $.\\
		If $m=5, \phi_{f_{1}} (w_{2}^{0}) < \phi_{f_{1}} (w_{5}^{0}) < \phi_{f_{1}} (w_{3}^{0}) < \phi_{f_{1}} (w_{1}^{0}) < \phi_{f_{1}} (w_{0}^{0}) < \phi_{f_{1}} (w_{4}^{0}) < \phi_{f_{1}} (w_{0}^{1}) $.\\
			If $m \geq 7, \phi_{f_{1}} (w_{m-1}^{0}) < \phi_{f_{1}} (w_{0}^{0}) < \phi_{f_{1}} (w_{0}^{1}) $.
\end{enumerate}
Thus, any two distinct vertices of $G= H_{m} \times K_{1,1}$ getting distinct vertex sums. Therefore, $G$ is antimagic.
\end{proof}
Next, we prove the antimagicness of $H_{m} \times K_{1,n}$ for the values $m \geq 3, n \geq 2$ which proved in theorem  \ref{Helmmodd} and theorem \ref{Helmmeven}.
\begin{theorem} \label{Helmmodd}
The tensor product of Helm $W_m, m \geq 3$ and star $K_{1,n}$ where $m$ is odd and $n \geq 2$  admits antimagic labeling.
\end{theorem}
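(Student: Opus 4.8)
The plan is to leverage the authors' own observation that $W_m\times K_{1,n}$ is an induced subgraph of $G=H_m\times K_{1,n}$, so that the labeling of Theorem~\ref{Wheelmodd} (the odd-$m$ wheel case) can be recycled on the $4mn$ wheel edges $\delta'\cup\beta''$ and only the $2mn$ genuinely new helm edges demand fresh treatment. Concretely, I would split the label set $\{1,\dots,6mn\}$ into three blocks: the bottom block $\{1,\dots,mn\}$ for the leaf edges $(w_{m+i}^{j},w_{i}^{0})$, the middle block $\{mn+1,\dots,2mn\}$ for the edges $(w_{i}^{j},w_{m+i}^{0})$, and the top block $\{2mn+1,\dots,6mn\}$ for the wheel edges, onto which I transplant the Theorem~\ref{Wheelmodd} scheme shifted up by $2mn$. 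The point of the bottom block is that each $w_{m+i}^{j}$ is a degree-one leaf with $\phi_f(w_{m+i}^{j})=f(w_{m+i}^{j},w_i^{0})$, so a distinct bottom-block label makes the leaf family injective and forces it strictly below every other vertex sum: each non-leaf vertex either meets a top-block label exceeding $mn$, or, being an outer-rim vertex $w_{m+i}^{0}$, collects $n\ge 2$ middle-block labels already summing past $mn$.

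Next I would recompute the six families of vertex sums induced by this labeling and record their internal orderings, mirroring observations (1)--(3) of Theorem~\ref{Wheelmodd}. The center $w_0^0$ and the vertices $w_0^{j}\in\gamma'''$ are incident only to wheel edges $\beta'$ and $\beta'''$, so their sums are exactly the Theorem~\ref{Wheelmodd} values raised by the fixed increments $mn\cdot 2mn$ and $m\cdot 2mn$ respectively; this keeps $w_0^0$ as the global maximum and $\phi_f(w_0^1)<\cdots<\phi_f(w_0^{n})$ as a clean increasing band, and the uniform shift preserves every intra-class wheel ordering since all vertices in a given class meet the same number of wheel edges. The outer-rim sums $\phi_f(w_{m+i}^{0})=\sum_{j=1}^{n} f(w_i^{j},w_{m+i}^{0})$ are block sums of $n$ middle-block labels, which I arrange to be monotone in $i$. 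The delicate families are the augmented ones: each $\alpha'$-sum $\phi_f(w_i^{j})$ picks up exactly one extra middle-block label, and each wheel-rim sum $\phi_f(w_i^{0})$ picks up the bottom-block total $\sum_{j=1}^{n} f(w_{m+i}^{j},w_i^{0})$ over its $n$ incident leaf edges.

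The main obstacle, exactly as in Theorems~\ref{Wheelmodd}--\ref{Wheelmeven} and Lemma~\ref{Helmn=1}, is cross-class separation after these perturbations, which are coupled: the bottom-block assignment fixes the leaf sums and the rim increments simultaneously, while the middle-block assignment fixes the outer-rim sums and the $\alpha'$ increments simultaneously. I must choose the bottom block so that the increment added to each $\phi_f(w_i^{0})$ stays strictly below the gaps separating consecutive rim sums in the chain $\phi_f(w_2^{0})<\phi_f(w_4^{0})<\cdots<\phi_f(w_1^{0})<\cdots$ of Theorem~\ref{Wheelmodd}, lest the odd-$i$/even-$i$ interleaving collapse; and I must choose the middle block so that the label added to each $\phi_f(w_i^{j})$ preserves the chain $\phi_f(w_1^{j})<\phi_f(w_3^{j})<\cdots<\phi_f(w_2^{j})<\cdots$ without pushing any $\alpha'$-sum into the rim band or the $\gamma'''$ band. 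I expect to settle this by parity bookkeeping, tracking which augmented sums are odd and which are even now that the leaf and middle blocks have shifted parities for $n\ge 2$, combined with explicit magnitude inequalities checked across the boundary indices $i=1,2,m$ and the odd/even split, closing the argument with the same itemized list of odd-sum and even-sum comparisons that finishes Theorem~\ref{Wheelmodd}.
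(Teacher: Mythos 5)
Your high-level architecture is genuinely close to the paper's: the labeling $g'$ in the paper also reserves the smallest $2mn$ labels for the $2mn$ new pendant-type edges $(w_i^j,w_{m+i}^0)$ and $(w_{m+i}^j,w_i^0)$, places the cycle-type edges in $(2mn,4mn]$ and the spoke-type edges in $(4mn,6mn]$, and your observation that the degree-one vertices $w_{m+i}^j$ are automatically isolated at the bottom is correct. The genuine gap is at exactly the step you defer: you never exhibit the within-block assignments, and the one thing you do commit to --- transplanting the Theorem~\ref{Wheelmodd} scheme verbatim, shifted by $2mn$ --- cannot be completed by a naive choice. Each $\alpha'$-sum picks up one middle-block label while the same $n$ labels also determine the rim sum $\phi(w_{m+i}^0)$, so the two constraints cannot be optimized independently; and under the natural monotone assignment $mn+(i-1)n+j$ on $(w_i^j,w_{m+i}^0)$ the augmented sums become $9mn+(4i-3)n+5j-1$ for $i$ odd and $12mn+(4i-3)n+5j-1$ for $i$ even, whose equality reduces to $n\,(4(i-i')-3m)=5(j'-j)$; this has admissible solutions whenever $5\mid n$ (for $m=5$, $n=5$ take $i=5$, $j=4$ and $i'=2$, $j'=1$, both sums equal $329$). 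So the ``shift and locally adjust'' plan is not merely unwritten, it is false for infinitely many $(m,n)$ without a coordinated repair, and you supply none. Note also that the paper does not in fact shift Theorem~\ref{Wheelmodd} verbatim: it swaps the $mn$-offsets between odd and even $i$ on the cycle edges and flips the parities of the spoke and $\delta'''$ labels relative to the wheel proof.

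The second structural point your plan misses is that the cross-class magnitude comparisons flip with the relative size of $m$ and $n$: your rim sums $\phi(w_{m+i}^0)$ are of order $mn^2$ to $2mn^2$ while $\phi(w_0^j)\approx 5m^2n$, and these bands collide once $n$ is of order $m$, so regime-free ``explicit magnitude inequalities'' cannot close the argument. This is precisely why the paper splits into three labelings $g'$, $g''$, $g'''$ according to $m\geq n$ with $n$ odd, $m<n$ with $n$ odd, and $n$ even, and in $g''$ it swaps blocks between the pendant edges and the $\delta'''$ edges ($g''(w_i^j,w_{m+i}^0)=4mn+g'(w_i^j,w_{m+i}^0)$ while $g''(w_i^0,w_0^j)=g'(w_i^0,w_0^j)-4mn$) to restore the orderings when $m<n$. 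Moreover, the paper's cross-class tool is parity: within $\{1,\dots,2mn\}$ it gives all odd labels to the family $(w_i^j,w_{m+i}^0)$ and all even labels to the leaf edges $(w_{m+i}^j,w_i^0)$, simultaneously making the helm-part spokes even and $\delta'''$ odd (the reverse of Theorem~\ref{Wheelmodd}), so that entire classes of vertex sums are separated by residue mod $2$. Your contiguous blocks $\{1,\dots,mn\}$ and $\{mn+1,\dots,2mn\}$ each contain both parities and destroy this mechanism, which is why your scheme is forced back onto magnitude arguments where the collisions above live. As it stands, the proposal is a plausible outline whose hard core --- the parity-structured pendant assignment and the $(m,n)$-regime case analysis --- is missing.
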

\begin{theorem}\label{Helmmeven}
The tensor product of helm $H_m, m \geq 4$ and star $K_{1,n}$ where $m$ is even and $n \geq 2$  admits antimagic labeling.
\end{theorem}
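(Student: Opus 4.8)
The plan is to construct an explicit bijection $f : E(G) \to \{1, 2, \ldots, 6mn\}$ by extending the even-$m$ wheel labeling of Theorem \ref{Wheelmeven}. Since the paper records that $W_m \times K_{1,n}$ is an induced subgraph of $G = H_m \times K_{1,n}$, I would keep the scheme of Theorem \ref{Wheelmeven} on the $4mn$ core edges $\delta' \cup \beta'' \cup \delta'''$, placing those labels in a dedicated block of $\{1, \ldots, 6mn\}$, and then distribute the remaining $2mn$ labels over the new pendant edges $\{(w_i^j, w_{m+i}^0)\}$ and $\{(w_{m+i}^j, w_i^0)\}$ coming from the helm's pendant edges $u_i u_{m+i}$.

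The crucial structural point is a degree computation inside the tensor product. Because each pendant vertex $u_{m+i}$ has degree $1$ in $H_m$, while in $K_{1,n}$ the centre $v_0$ has degree $n$ and every $v_j$ with $j \geq 1$ has degree $1$, the vertex $w_{m+i}^0$ has degree $n$ (it lies in $\gamma''$) whereas each $w_{m+i}^j$ with $j \geq 1$ has degree exactly $1$ (it lies in $\gamma'$). I would exploit this by assigning the $mn$ smallest labels $\{1, \ldots, mn\}$ to the degree-one pendant edges $(w_{m+i}^j, w_i^0)$: then $\phi_f(w_{m+i}^j)$ is just that single label, so these $mn$ sums are the smallest in the graph and are pairwise distinct. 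This is the exact analogue of the identity $\phi_{f_1}(w_{m+i}^1) = f_1(w_i^0, w_{m+i}^1)$ used in Lemma \ref{Helmn=1}.

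With the degree-one vertices separated, the remaining verification parallels Theorem \ref{Wheelmeven}, but with two modifications. First, every surviving $\alpha'$-vertex $w_i^j$ now also carries the pendant edge $(w_i^j, w_{m+i}^0)$, so its sum equals its wheel-core value plus that one label; I would recompute the $\gamma'$ sums and check that they remain linear in $m, n, i, j$ and hence still admit the monotone chain of observation (1) of Theorem \ref{Wheelmeven}. Second, the degree-$n$ pendant hubs $w_{m+i}^0$, whose sum is the total of the $n$ labels on $(w_i^j, w_{m+i}^0)$, must be interleaved into the rim ordering of $\gamma''$ without collision. I would choose the pendant-hub block so that these $m$ new sums fall into a reserved gap of the rim chain, using the even-$m$ index partition at $2\lfloor m/4\rfloor$ and $2\lceil m/4\rceil$ that already governs observation (2) of Theorem \ref{Wheelmeven}. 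The hub $w_0^0$ and the spoke vertices $w_0^j \in \gamma'''$ are untouched by the pendant edges, so their sums are inherited verbatim.

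The main obstacle I expect is the cross-class collision check in this even-$m$ regime. Unlike the $n = 1$ Lemma \ref{Helmn=1}, where the small cases $m = 3, 4, 5$ were dispatched by hand, here both parities of $n$ interact with the floor/ceiling splitting of the rim indices, and one must show that the $\gamma''$ sums (rim vertices and pendant hubs, all of order $n^2$) stay disjoint from the $\gamma'''$ spoke sums (of order $m^2 n$) across the whole range $m \geq 4$, $n \geq 2$. I would control this by keeping the $\gamma'$ sums linear (hence smallest after the pendants), forcing the $\gamma''$ and $\gamma'''$ sums into non-overlapping magnitude bands or residue classes, and finally confirming that $\phi_f(w_0^0)$, equal to $3m^2 n^2$ or $3m^2 n^2 + n$ according to the parity of $n$, dominates every other sum, exactly as in the wheel case.
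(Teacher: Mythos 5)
Your overall strategy is in fact the paper's strategy: the paper also gives the $2mn$ pendant edges the smallest labels and places a shifted, parity-adjusted copy of the wheel-type labeling on the remaining $4mn$ edges, so that the degree-one vertices $w_{m+i}^j$ have sums equal to a single small label and the hub sums of $w_{m+i}^0$ (of order $mn^2$) sit in a band below the rim sums. But your plan has a concrete gap at exactly the point where the paper's proof does its real work. You list only two modifications to the wheel verification, and you miss the third: each pendant-leaf edge $(w_{m+i}^j, w_i^0)$ is incident to a \emph{rim} vertex $w_i^0$, so assigning the labels $\{1,\dots,mn\}$ to these edges adds $n$ of them to every rim sum, and the monotone rim chain of observation (2) of Theorem \ref{Wheelmeven} is not inherited for free. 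This is precisely why the paper's definition of $g'(w_{m+i}^j,w_i^0)$ is the most elaborate piece of the whole labeling, with eight cases keyed to the indices $2\left\lfloor m/4\right\rfloor$ and $2\left\lceil m/4\right\rceil$: the leaf labels are engineered so that the perturbed rim sums $\phi_{g'}(w_i^0)$ remain strictly ordered. You invoke that floor/ceiling partition only for interleaving the hub sums, which is the easy part; without an explicit choice of leaf labels compatible with the rim ordering, the central collision check is left unproven.

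There is also an internal inconsistency in the block arithmetic. You cannot simultaneously give the leaf edges the labels $\{1,\dots,mn\}$ and keep the wheel sums verbatim: in Theorem \ref{Wheelmeven} the rim edges occupy $\{1,\dots,2mn\}$, so freeing the bottom $2mn$ labels for the pendant edges forces the entire core block up by $2mn$, and then $\phi_f(w_0^0)$ becomes $3m^2n^2 + mn\cdot 2mn = 5m^2n^2$ plus parity corrections --- the paper indeed obtains $5m^2n^2+mn$ (resp.\ $5m^2n^2+n$), not the $3m^2n^2$ or $3m^2n^2+n$ you state in your dominance check. Finally, your closing appeal to ``non-overlapping magnitude bands or residue classes'' corresponds to something the paper must handle by constructing three distinct labelings $g', g'', g'''$ according to whether $m \geq n$ with $n$ odd, $m < n$ with $n$ odd, or $n$ even, together with explicit odd-sum/even-sum boundary comparisons between classes; gesturing at the mechanism does not substitute for exhibiting labelings in which the parities actually separate. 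In short: right skeleton, but the leaf-label case analysis, the corrected hub sum, and the three-regime parity bookkeeping are the proof, and they are absent.
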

Before proving theorem \ref{Helmmodd} and theorem \ref{Helmmeven}, we classify the tensor product of Helm $H_{m}, m \geq 3$ and star $K_{1,n}, n \geq 2$ into three cases based on the values of $m$ and $n$.
\begin{equation}   \nonumber
G \cong
     \begin{cases}
        G^{'} &, if \ m \geq n \ \& \ n \ \text{is odd}   \\
				G^{''} &, if \ m < n \ \& \ n \ \text{is odd}   \\
        G^{'''} &, if  \ n \ \text{is even}, 
    \end{cases}
\end{equation}
In the proof of theorem \ref{Helmmodd} and \ref{Helmmeven}, we use the labelling of $G^{'}, G^{''}$ and $G^{'''}$ as $g^{'}, g^{''}$ and $g^{'''}$ respectively.\\
\textbf{Proof of theorem \ref{Helmmodd}: } The labeling of the edges of graph $G$ are given in the following steps. \\
\emph{Step 1: } Labeling the edges of $(w_{0}^{0},w_{i}^{j}) \in \delta{'}$ for $1 \leq i \leq m$ and $1 \leq j \leq n$ is defined as follows:
\begin{equation}   \nonumber
\begin{split}
g^{'} (w_{0}^{0},w_{i}^{j}) &= 
    \begin{cases}
        4mn+(i-1)n+2j &, if \ \text{i is odd}  \\
				5mn+(i-1)n+2j &, if \ \text{i is even},
    \end{cases}\\
g^{''} (w_{0}^{0},w_{i}^{j}) &= g^{'} (w_{0}^{0},w_{i}^{j})   \\
g^{'''} (w_{0}^{0},w_{i}^{j}) &=
    \begin{cases}
        4mn+2j &, if \ i = 1  \\
				 g^{'} (w_{0}^{0},w_{i}^{j}) - 1 &, if \  i \neq 1 
    \end{cases}
		\end{split}
\end{equation}
\emph{Step 2: } Labeling the edges of $\delta^{''}$ as follows:\\
First, label the edges of $(w_{i}^{j}, w_{m+i}^{0})$ for each $i, 1 \leq i \leq m$ and $1 \leq j \leq n$,\\
\begin{equation}   \nonumber
\begin{split}
g^{'} (w_{i}^{j}, w_{m+i}^{0}) &= 
     \begin{cases}
        (i-1)n+2j-1 &, if \ \text{i is odd}   \\
        (m-1)n+in+2j-1 &, if \ \text{i is even}, 
    \end{cases}\\
g^{''} (w_{i}^{j}, w_{m+i}^{0}) &= 4mn +g^{'} (w_{i}^{j}, w_{m+i}^{0}) \\
\noindent \text{When} \ n = 2, \\
g^{'''} (w_{i}^{j}, w_{m+i}^{0}) &= 
     \begin{cases}
        2j &, if \ i = 1  \ \& \ m \geq 3 \\
				g^{'} (w_{i}^{j}, w_{m+i}^{0}) + 1 &, if \ i \neq 1  \ \& \ m \geq 3
    \end{cases}\\
\noindent \text{When} \ n \geq 3, \\
g^{'''} (w_{i}^{j}, w_{m+i}^{0}) &= 
     \begin{cases}
		4mn+2j-1 &, if \ i = 1  \ \& \ m = 3 \\
        2j-1 &, if \ i = 1  \ \& \ m \geq 5 \\
				4mn+ g^{'} (w_{i}^{j}, w_{m+i}^{0}) + 1 &, if \ i \neq 1 \ \& \ m = 3 \\
			  g^{'} (w_{i}^{j}, w_{m+i}^{0}) + 1 &, if \ i \neq 1 \ \& \ m \geq 5 				
    \end{cases}
		\end{split}
\end{equation}
Next, label the edges of $(w_{m+i}^{j}, w_{i}^{0})$ for each $i, 1 \leq i \leq m$ and $1 \leq j \leq n$,
\begin{equation}   \nonumber
\begin{split}
g^{'} (w_{m+i}^{j}, w_{i}^{0}) &=  
     \begin{cases}
        n(m+i)+2j &, if \ \text{i is odd} \ \& \ i \neq m    \\
        (i-2)n+2j &, if \ \text{i is even}    \\
				(m-1)n+2j &, if \ i = m 
    \end{cases}\\
g^{''} (w_{m+i}^{j}, w_{i}^{0}) &= g^{'} (w_{m+i}^{j}, w_{i}^{0})		\\
\text{When} \ n = 2, \\
g^{'''} (w_{m+i}^{j}, w_{i}^{0}) &= 
     \begin{cases}
        2j-1 &, if \ i = 2    \\
        g^{'} (w_{m+i}^{j}, w_{i}^{0}) -1 &, if \ i \neq 2
    \end{cases} \\
\text{When} \ n \geq 3, \\
g^{'''} (w_{m+i}^{j}, w_{i}^{0}) &= 
     \begin{cases}
        2j &, if \ i = 2    \\
        g^{'} (w_{m+i}^{j}, w_{i}^{0}) -1 &, if \ i \neq 2
    \end{cases}
\end{split}
\end{equation}
Finally, we label the remaining edges of $\delta^{''}$ for each $i, 1 \leq i \leq m$ and for $j, 1 \leq j \leq n,$
\begin{equation}   \nonumber
\begin{split}
g^{'} (w_{i}^{0}, w_{i+1}^{j}) &= 
    \begin{cases}
        3mn+in+j &, if \ \text{i is odd}  \\
				2mn+in+j &, if \ \text{i is even}
    \end{cases} \\
		g^{''} (w_{i}^{0}, w_{i+1}^{j}) &= g^{'} (w_{i}^{0}, w_{i+1}^{j}) \\
		g^{'''} (w_{i}^{0}, w_{i+1}^{j}) &= g^{'} (w_{i}^{0}, w_{i+1}^{j}) \\
g^{'} (w_{i}^{j}, w_{i+1}^{0}) &= 
    \begin{cases}
        3mn+in+j &, if \ \text{i is even}  \\
				2mn+in+j &, if \ \text{i is odd}
    \end{cases}
    \end{split}
    \end{equation}

  \begin{equation}   \nonumber
\begin{split}  
g^{''} (w_{i}^{j}, w_{i+1}^{0}) &= g^{'} (w_{i}^{j}, w_{i+1}^{0}) \\
g^{'''} (w_{i}^{j}, w_{i+1}^{0}) &= g^{'} (w_{i}^{j}, w_{i+1}^{0}) \\
g^{'} (w_{1}^{j}, w_{m}^{0}) &=  2mn+j \\
g^{''} (w_{1}^{j}, w_{m}^{0}) &= g^{'} (w_{1}^{j}, w_{m}^{0}) \\
g^{'''} (w_{1}^{j}, w_{m}^{0}) &= g^{'} (w_{1}^{j}, w_{m}^{0}) \\
g^{'} (w_{1}^{0}, w_{m}^{j}) &= 3mn+j \\
 g^{''} (w_{1}^{0}, w_{m}^{j}) &= g^{'} (w_{1}^{0}, w_{m}^{j}) \\
 g^{'''} (w_{1}^{0}, w_{m}^{j}) &= g^{'} (w_{1}^{0}, w_{m}^{j}) 
\end{split}
\end{equation}
\emph{Step 3: }  We label the edges of $(w_{i}^{0}, w_{0}^{j}) \in \delta^{'''}$ for each $i, 1 \leq i \leq m$ and $1 \leq j \leq n, $
\begin{equation}   \nonumber
\begin{split}
g^{'} (w_{i}^{0}, w_{0}^{j}) &= 
    \begin{cases}
        4mn+(i-2)n+2j-1 &, if \ \text{i is even}  \\
				5mn-n +2j -1 &, if \ i = m \\
				5mn+in+2j-1 &, if \ \text{i is odd} \ \& \ i \neq m \\
    \end{cases} 
		\end{split}
\end{equation}	
			
\begin{equation}   \nonumber
\begin{split}	
g^{''} (w_{i}^{0}, w_{0}^{j}) &=  g^{'} (w_{i}^{0}, w_{0}^{j}) - 4mn \\
g^{'''} (w_{i}^{0}, w_{0}^{j}) &= 
    \begin{cases}
		    2j-1 &, if \ i = 2  \ \& \ m = 3 \\
				4mn+2j-1 &, if \ i = 2  \ \& \ m \geq 5 \\
				g^{'} (w_{i}^{0}, w_{0}^{j}) + 1 - 4mn &, if \ i \neq  2  \ \& \ m = 3 \\
				g^{'} (w_{i}^{0}, w_{0}^{j}) + 1 &, if \ i \neq 2 \ \& \ m \geq 5
    \end{cases}
		\end{split}
\end{equation}
From the above labeling schemes, we obtained the following vertex sums for every vertex $v \in V(G)$. \\
The vertex sum of vertex $(w_{0}^{0})$ is,
\begin{equation} \nonumber
\begin{split}
\phi_{g^{'}} (w_{0}^{0}) & = \phi_{g^{''}} (w_{0}^{0})  =   5m^{2} n^{2} + mn,\\
\phi_{g^{'''}} (w_{0}^{0}) & =   5m^{2} n^{2} + n,
\end{split}
\end{equation}
For each $i, 1 \leq i \leq m$ and $1 \leq j \leq n, $ the vertex sum of the vertex $w_{i}^{j} \in \gamma^{'}$ is,
\begin{equation}  \nonumber
\begin{split}
\phi_{{g}^{'}} (w_{i}^{j}) &= 
    \begin{cases}
        8mn+6j+4in-3n-1 &, if \ \text{i is odd}  \\
				12mn+4in-3n+6j-1 &, if \ \text{i is even},
    \end{cases}\\
		\phi_{{g}^{''}} (w_{i}^{j}) &= \phi_{{g}^{'}} (w_{i}^{j}) +4mn \\
			\text{When} \ n = 2, \\
		\phi_{{g}^{'''}} (w_{i}^{j}) &= 
    \begin{cases}
		   \phi_{{g}^{'}} (w_{i}^{j}) +1 &, if \ i = 1 \\
        \phi_{{g}^{'}} (w_{i}^{j})  &, if \ i \neq 1 
    \end{cases}\\
\end{split}		
	\end{equation} 	
\begin{equation}  \nonumber
\begin{split}		
			\text{When} \ n \geq 3, \\
		\phi_{{g}^{'''}} (w_{i}^{j}) &= 
    \begin{cases}
		    4mn+ \phi_{{g}^{'}} (w_{i}^{j}) &, if \ m = 3 \\
        8mn+6j+n-1 &, if \ i = 1 \ \& \ m \geq 5 \\
				\phi_{{g}^{'}} (w_{i}^{j}) &, if \ i \neq 1 \ \& \ m \geq 5 \\
    \end{cases}
\end{split}
\end{equation}
The vertex sum of the vertex $w_{m+i}^{j}$ gets their vertex sums as same as of the edge label $(w_{m+i}^{j},w_{i}^{0})$ under their labelled function.  \\
The vertex sum of the vertex $w_{i}^{0} \in \gamma^{''}$ for $1 \leq i \leq m$ is,
\begin{equation}   \nonumber
\begin{split}
\phi_{{g^{'}}} (w_{i}^{0}) &= 
    \begin{cases}
        12mn^{2} + 4in^{2} + 2n^{2} + 2n &, if \ \text{i is odd}  \ \& \ i \neq m \\
				8mn^{2} + 4in^{2} -2n^{2} + 2n &, if \ \text{i is even} \\
				12mn^{2} + 2n &, if \ i = m
    \end{cases} \\
\phi_{{g^{''}}} (w_{i}^{0}) &=  \phi_{{g^{'}}} (w_{i}^{0}) - 4mn^{2} \\
\text{When} \ n = 2, \\
\phi_{{g^{'''}}} (w_{i}^{0}) &= 
    \begin{cases}
		    \phi_{{g^{'}}} (w_{i}^{0}) - n &, if \ i = 2 \\
        \phi_{{g^{'}}} (w_{i}^{0}) &, if \ i \neq 2
    \end{cases} \\
\text{When} \ n \geq 3,\\
\phi_{{g^{'''}}} (w_{i}^{0}) &= 
    \begin{cases}
		    \phi_{{g^{'}}} (w_{i}^{0}) &, if \ m=3 \\
        8mn^{2} + 6n^{2} + 3n &, if \ i=2 \ \& \ m \geq 5 \\
			  \phi_{{g^{'}}} (w_{i}^{0}) &, if \ i \neq 2 \ \& \ m \geq 5 
    \end{cases}
		\end{split}
\end{equation}
and the vertex sum of the vertex $w_{m+i}^{0} \in \gamma^{''} $ for each $i, 1 \leq i \leq m$ and $j, 1 \leq j \leq n,$
\begin{equation}  \nonumber
\begin{split}
\phi_{{g^{'}}} (w_{m+i}^{0}) &= 
    \begin{cases}
        in^{2} &, if \ \text{i is odd}  \\
				(m+i)n^{2} &, if \ \text{i is even},
    \end{cases} \\
\phi_{{g^{''}}} (w_{m+i}^{0}) &= 4mn^{2}  + \phi_{{g^{'}}} (w_{m+i}^{0})
\end{split}
\end{equation}
\begin{equation}  \nonumber
\begin{split}
\text{When} \ n = 2, \\
\phi_{{g^{'''}}} (w_{m+i}^{0})  &= 
    \begin{cases}
        n(n+1) &, \ if \ i = 1    \\
				\phi_{{g^{'}}} (w_{m+i}^{0})  &, \ if \ i \neq 1
    \end{cases} \\
		\text{When} \ n \geq 3, \\
\phi_{{g^{'''}}} (w_{m+i}^{0})  &= 
    \begin{cases}
        n^{2} &, \ if \ i = 1   \ \& \ m \geq 5  \\
				\phi_{{g^{'}}} (w_{m+i}^{0})  &, \ if \ i \neq 1   \ \& \ m \geq 5  \\
				4mn^{2} + n^{2} &, \ if \ i = 1   \ \& \ m = 3  \\
				4mn^{2} +\phi_{{g^{'}}} (w_{m+i}^{0}) + n &, \ if \ i \neq 1   \ \& \ m = 3  \\
    \end{cases}
		\end{split}
\end{equation}
Finally, the vertex sum of the vertex $w_{0}^{j} \in \gamma^{'''}$ for $j, 1 \leq j \leq n$ is,
\begin{equation} \ \nonumber
\begin{split}
\phi_{g^{'}} (w_{0}^{j}) &= 5m^{2}n-mn+(2j-1)m\\
\phi_{g^{''}} (w_{0}^{j})  &= \phi_{g^{'}} (w_{0}^{j}) - 4m^{2}n\\
\phi_{g^{'''}} (w_{0}^{j}) &=  
    \begin{cases}
        \phi_{{g^{'}}} (w_{0}^{j}) - 4mn^{2} + m -1 &, \ if \ m = 3  \\
				\phi_{{g^{'}}} (w_{0}^{j}) + m -1  &, \ if \ m \geq  5
    \end{cases}
\end{split}
\end{equation}
The observation $(1),(2)$ and $(3)$ as given in theorem.  (\ref{Wheelmodd}) is also true under the function $g^{'}, g^{''}$ and $g^{'''}$. Now, we also have the following observation.\\
(1) From the vertex $w_{m+i}^{j}$ for each $i, 1 \leq i \leq m$. We have,
\begin{multline} \nonumber
\phi_{{g}^{'}} (w_{m+2}^{j}) < \phi_{{g}^{'}} (w_{m+4}^{j}) < \cdots < \phi_{{g}^{'}} (w_{2m-1}^{j}) < \phi_{{g}^{'}} (w_{2m}^{j}) < \phi_{{g}^{'}} (w_{m+1}^{j}) <  \phi_{{g}^{'}} (w_{m+3}^{j}) < \cdots < \phi_{{g}^{'}} (w_{2m-2}^{j})
\end{multline}
further, for each $i, 1 \leq i \leq m$ there exist $j_{1}, j_{2}, 1 \leq j_{1}, j_{2} \leq n$ such that if $j_{1} < j_{2}$ we have $\phi_{{g}^{'}} (w_{m+i}^{j_{1}}) < \phi_{{g}^{'}} (w_{m+i}^{j_{2}})$.\\
(2) From the vertex $w_{i}^{0}$ for each $i, 1 \leq i \leq m$. We have,
\begin{multline}  \nonumber
\phi_{{g}^{'}} (w_{m+1}^{0}) < \phi_{{g}^{'}} (w_{m+3}^{0}) < \cdots < \phi_{{g}^{'}} (w_{2m}^{0}) < \phi_{{g}^{'}} (w_{m+2}^{0}) < \phi_{{g}^{'}} (w_{m+4}^{0}) < \cdots <  \phi_{{g}^{'}} (w_{2m-3}^{0}) < \cdots  < \phi_{{g}^{'}} (w_{2m-1}^{0}) 
\end{multline}
The same observations holds true for ${g}^{''}$ and ${g}^{'''}$. Therefore, it is clear that no two vertices belongs to the same set of vertices are equal. And from the above vertex observations and the defined vertex sums, we have the following:
\begin{itemize}
	\item The vertex sum of the vertex $w_{0}^{0}$ is greater than all the other vertex sums.
	\item If the vertex having a odd sum then we have, \\
$\phi_{{g}^{'}} (w_{2m-1}^{0})   < \phi_{{g}^{'}} (w_{0}^{1}), \phi_{{g}^{''}} (w_{0}^{n}) < \phi_{{g}^{''}} (w_{m+1}^{0}). $ \\
When $m \geq 5$ and $n \neq 2$, $\phi_{{g}^{'''}} (w_{2m-2}^{n}) < \phi_{{g}^{'''}} (w_{m-1}^{n}) < \phi_{{g}^{'''}} (w_{0}^{1})$. \\
When $m \geq 3$ and $n = 2$, $\phi_{{g}^{'''}} (w_{m-1}^{n}) < \phi_{{g}^{'''}} (w_{0}^{1})$.\\
When $m = 3, \phi_{{g}^{'''}} (w_{1}^{1}) > \phi_{{g}^{'''}} (w_{0}^{n}). $
\item If the vertex having a even sum then we have, \\
$\phi_{{g}^{'}} (w_{m+3}^{n}) < \phi_{{g}^{'}} (w_{1}^{1}) < \phi_{{g}^{'}} (w_{m-1}^{n}) < \phi_{{g}^{'}} (w_{2}^{0}), \phi_{{g}^{''}} (w_{2m-2}^{n}) < \phi_{{g}^{''}} (w_{1}^{1}) < \phi_{{g}^{''}} (w_{m-1}^{n}) < \phi_{{g}^{''}} (w_{2}^{0})$.\\
When $m \geq 5$ and $n \neq 2$, $\phi_{{g}^{'''}} (w_{m+2}^{n}) < \phi_{{g}^{'''}} (w_{m+1}^{0}) < \phi_{{g}^{'''}} (w_{2m-1}^{0}) < \phi_{{g}^{'''}} (w_{2}^{0})$.\\
When $m \geq 3$ and $n = 2$, $\phi_{{g}^{'''}} (w_{1}^{n}) < \phi_{{g}^{'''}} (w_{2}^{0})$.\\
When $m = 3, \phi_{{g}^{'''}} (w_{2}^{0}) > \phi_{{g}^{'''}} (w_{5}^{0}) > \phi_{{g}^{'''}} (w_{5}^{n})$.
\end{itemize} 
Hence, for any two distinct vertices in $G$, we obtained distinct vertex sum. Therefore, $G$ is antimagic.\\
\textbf{Proof of theorem \ref{Helmmeven}.} The labeling of edges of graph $G$ are given in the following steps. \\
\emph{Step 1: } Labeling the edges of $(w_{0}^{0}, w_{i}^{j}) \in \delta^{'}$ for each $i, 1 \leq i \leq m$ and for $j, 1 \leq j \leq n$ is defined as follows:
\begin{equation}   \nonumber
\begin{split}
g^{'} (w_{0}^{0},w_{i}^{j}) &=
    \begin{cases}
        4mn+(i-1)n+2j &, if \ \text{i is odd}  \\
				5mn+(m-i)n+2j &, if \ \text{i is even},
    \end{cases} \\
		 g^{''} (w_{0}^{0},w_{i}^{j}) &=  g^{'} (w_{0}^{0},w_{i}^{j}) \\
		g^{'''} (w_{0}^{0},w_{i}^{j}) &= 
    \begin{cases}
        4mn+2j &, if \ i = 1  \\
				g^{'} (w_{0}^{0},w_{i}^{j}) -1 &, if \ i \neq 1
    \end{cases}
		\end{split}
\end{equation}
\emph{Step 2: } Labeling the edges of $\delta^{''}$ as follows:\\
First, label the edges of $(w_{i}^{j}, w_{m+i}^{0})$ for each $i, 1 \leq i \leq m$ and $1 \leq j \leq n$,\\
\begin{equation}   \nonumber
\begin{split}
g^{'} (w_{i}^{j}, w_{m+i}^{0}) &= 
     \begin{cases}
        2j+n(i-1) &, if \ \text{i is odd}   \\
        2j+n(2m-i) &, if \ \text{i is even}, 
    \end{cases}\\
g^{''} (w_{i}^{j}, w_{m+i}^{0}) &= 4mn-1+g^{'} (w_{i}^{j}, w_{m+i}^{0}) \\
g^{'''} (w_{i}^{j}, w_{m+i}^{0}) &= 
     \begin{cases}
        2j-1 &, if \ i = 1  \text{and} \ n \neq 2  \\
        2j &, if \ i = 1  \text{and} \ n = 2 \\ 
				g^{'} (w_{i}^{j}, w_{m+i}^{0}) &, if \ i \neq 1
    \end{cases}
	\end{split}
\end{equation}
Next, label the edges of $(w_{m+i}^{j}, w_{i}^{0})$ for each $i, 1 \leq i \leq m$ and $1 \leq j \leq n$,
 \begin{equation} \nonumber
\begin{split}
g^{'} (w_{m+i}^{j}, w_{i}^{0}) &= 
    \begin{cases}
        2j-1+n(i-2) &, if \ 2 \leq i \leq  2\left\lfloor \dfrac{m}{4}\right\rfloor \ \& \ i \ even \\
        2n\left\lfloor \dfrac{m}{4}\right\rfloor + 2j-1 &, if  \  i =m, \\
				ni+2j-1 &, if \ 2\left\lfloor \dfrac{m}{4}\right\rfloor + 2 \leq i \leq m-2 \ \& \ \ i \ even \\
				n(2m-1-i)+2j-1 &, if \ 2 \left\lceil \dfrac{m}{4}\right\rceil + 1 \leq i \leq m-1 \ \& \ \ i \ odd \\
				2mn-2n\left\lceil \dfrac{m}{4}\right\rceil + 2j -1 &, if \ i=1 for  \ m \neq 4\\
				3mn-4n\left\lceil \dfrac{m}{4}\right\rceil + (3-i)n+2j-1 &, if \  3 \leq  i \leq 2\left\lceil \dfrac{m}{4}\right\rceil -1 \ \& \ \ i \ odd \ for \ m \neq 4\\
				4n+2j-1 &, if  \  i = 1 \ for \ m = 4\\
				6n+2j-1 &, if  \  i=3 \ for \ m=4
    \end{cases}\\	
g^{''} (w_{m+i}^{j}, w_{i}^{0}) &=
    \begin{cases}
		 2j - 1 &, if  \  i =2 \\
		g^{'} (w_{m+i}^{j}, w_{i}^{0}) + 1 &, if  \  i \neq 2
    \end{cases}\\
g^{'''} (w_{m+i}^{j}, w_{i}^{0}) &=
    \begin{cases}
		 2j &, if \ i =2 \ \text{and} \ n \neq 2 \\
		 2j-1 &, if \ i =2 \ \text{and} \ n = 2 \\
		g^{'} (w_{m+i}^{j},w_{i}^{0}) &,  if  \  i \neq 2
    \end{cases}
		\end{split}
\end{equation}
Finally, we label the remaining edges of $\delta^{''}$ for each $i, 1 \leq i \leq m$ and for $j, 1 \leq j \leq n,$
\begin{equation} \nonumber
\begin{split}
g^{'} (w_{i}^{j}, w_{m}^{0}) &= 2mn+j \\
g^{''} (w_{i}^{j}, w_{m}^{0}) &= g^{'} (w_{i}^{j}, w_{m}^{0}) \\
g^{'''} (w_{i}^{j}, w_{m}^{0}) &= g^{'} (w_{i}^{j}, w_{m}^{0}) \\
g^{'} (w_{m}^{j}, w_{1}^{0}) &= 3mn+j \\
g^{''} (w_{m}^{j}, w_{1}^{0}) &= g^{'} (w_{m}^{j}, w_{1}^{0}) \\
g^{'''} (w_{m}^{j}, w_{1}^{0}) &= g^{'} (w_{m}^{j}, w_{1}^{0})
\end{split}
\end{equation}
For each $i, 1 \leq i \leq m-1$ and $1 \leq j \leq n,$
\begin{equation}   \nonumber
\begin{split}
g^{'} (w_{i}^{j}, w_{i+1}^{0}) &= 
    \begin{cases}
        (2m+i)n+j &, if \ \text{i is odd}  \\
				(4m-i)n+j &, if \ \text{i is even}
    \end{cases} \\
g^{''} (w_{i}^{j}, w_{i+1}^{0}) &= g^{'} (w_{i}^{j}, w_{i+1}^{0})\\
g^{'''} (w_{i}^{j}, w_{i+1}^{0}) &= g^{'} (w_{i}^{j}, w_{i+1}^{0}) \\
g^{'} (w_{i}^{0}, w_{i+1}^{j}) &= 
    \begin{cases}
        (4m-i)n+j &, if \ \text{i is odd}  \\
				(2m+i)n+j &, if \ \text{i is even}
    \end{cases}\\
		g^{''} (w_{i}^{0}, w_{i+1}^{j}) &= g^{'} (w_{i}^{0}, w_{i+1}^{j}) \\
		g^{'''} (w_{i}^{0}, w_{i+1}^{j}) &= g^{'} (w_{i}^{0}, w_{i+1}^{j}) 
		\end{split}
\end{equation}
\emph{Step 3: }  We label the edges of $(w_{i}^{0}, w_{0}^{j}) \in \delta^{'''}$ for each $i, 1 \leq i \leq m$ and $1 \leq j \leq n, $
\begin{equation} \nonumber
\begin{split}
g^{'} (w_{i}^{0}, w_{0}^{j}) &= 
    \begin{cases}
        4mn+(i-2)n+2j-1 &, if \ 2 \leq i \leq  2\left\lfloor \dfrac{m}{4}\right\rfloor \ \& \ \ i \ even \\
        4mn+2\left\lfloor \dfrac{m}{4}\right\rfloor n +2j-1 &, if  \ i =m, \\
				4mn+ni+2j-1 &, if \ 2\left\lfloor \dfrac{m}{4}\right\rfloor + 2 \leq i \leq m-2 \ \& \ \ i \ even \\
				6mn-n(i+1)+2j-1 &, if \ 2 \left\lceil \dfrac{m}{4}\right\rceil + 1 \leq i \leq m-1 \ \& \ \ i \ odd \\
				6mn-2\left\lceil \dfrac{m}{4}\right\rceil n +2j-1 &, if \ i=1 \ for \ m \neq 4 \\
				4mn+4n+2j-1 &, if \ i=1 \ for \ m = 4 \\
				6mn+n-1+2j-ni &, if \  3 \leq  i \leq 2\left\lceil \dfrac{m}{4}\right\rceil -1 \ \& \ \ i \ odd \ and \ m \neq 4\\
				4mn+6n+2j-1 &, if  \ i = 3 \ for \ m = 4,
    \end{cases} \\
g^{''} (w_{i}^{0}, w_{0}^{j}) &= 
    \begin{cases}
        2j &, if  \ i = 2\\
        g^{'} (w_{i}^{0}, w_{0}^{j}) - 4mn &, if  \ i \neq 2
    \end{cases}\\
g^{'''} (w_{i}^{0}, w_{0}^{j}) &= 
    \begin{cases}
        4mn + (2j-1) &, if \  i = 2\\
        g^{'} (w_{i}^{0}, w_{0}^{j}) +1 &, if \  i \neq 2
    \end{cases}
		\end{split}
\end{equation}
From the above labeling schemes, we obtained the following vertex sums for every vertex $v.$ \\
The vertex sum of vertex $(w_{0}^{0})$ is,
\begin{equation} \nonumber
\begin{split}
\phi_{g^{'}} (w_{0}^{0}) = \phi_{g^{''}} (w_{0}^{0}) & =   5m^{2} n^{2} + mn,\\
\phi_{g^{'''}} (w_{0}^{0}) & =   5m^{2} n^{2} + n,
\end{split}
\end{equation}
For each $i, 1 \leq i \leq m$ and $1 \leq j \leq n, $ the vertex sum of the vertex $w_{i}^{j} \in \gamma^{'}$ is,
\begin{equation}  \nonumber
\begin{split}
\phi_{{g}^{'}} (w_{i}^{j}) &= 
    \begin{cases}
        8mn+4in-3n+6j &, if \ \text{i is odd}  \\
				16mn-4in+6j+n &, if \ \text{i is even},
    \end{cases}\\
		\phi_{{g}^{''}} (w_{i}^{j}) &= \phi_{{g}^{'}} (w_{i}^{j}) +4mn-1 \\
		\text{When} \ n = 2, \\
		\phi_{{g}^{'''}} (w_{i}^{j}) &= 
    \begin{cases}
        \phi_{{g}^{'}} (w_{i}^{j}) &, if \ i = 1  \\
				\phi_{{g}^{'}} (w_{i}^{j}) - 1 &,  if \ i \neq 1  
    \end{cases}\\
		\text{When} \ n \neq 2, \\
		\phi_{{g}^{'''}} (w_{i}^{j}) &= \phi_{{g}^{'}} (w_{i}^{j}) -1
\end{split}
\end{equation}
The vertex sum of the vertex $w_{m+i}^{j} \in \gamma^{'}$ gets their vertex sums as same as of the edge label $(w_{m+i}^{j},w_{i}^{0})$ under their labelled function.  \\
The vertex sum of the vertex $w_{i}^{0} \in \gamma^{''}$ for $1 \leq i \leq m$ is,\\
\begin{equation} \nonumber
\begin{split}
\phi_{g^{'}} (w_{i}^{0}) &= 
    \begin{cases}
		13mn^{2}+2n^{2}+n &, if \ i=1 \ \text{for} \ m=4 \\
		15mn^{2}-2n^{2}+n-2n^{2} \left\lceil \dfrac{m}{4}\right\rceil &, if \ i = 1 \ \text{for} \ m \neq 4 \\
		13mn^{2}+6n^{2}+n &, if \ i=3 \ \text{for} \ m=4 \\
		8mn^{2}+4in^{2}-2n^{2}+n &, if \ 2 \leq i \leq  2\left\lfloor \dfrac{m}{4}\right\rfloor \ \& \ \ i \ even \\
		8mn^{2}+4in^{2}+2n^{2}+n &, if \ 2\left\lfloor \dfrac{m}{4}\right\rfloor + 2 \leq i \leq m-2 \ \& \ \ i \ even \\
    16mn^{2}-4in^{2}+2n^{2}+n &, if \ 2 \left\lceil \dfrac{m}{4}\right\rceil + 1 \leq i \leq m-1 \ \& \ \ i \ odd \\
		9mn^{2}+2n^{2}+4n^{2}\left\lfloor \dfrac{m}{4}\right\rfloor + n &, if \  i =m, \\	
		16mn^{2}+4n^{2}-6n^{2}i+4n^{2}\left\lceil \dfrac{m}{4}\right\rceil + n &, if \ 3 \leq i \leq 2 \left\lceil \dfrac{m}{4}\right\rceil -1 for \ m \neq 4 				
    \end{cases} \\
\phi_{g^{''}} (w_{i}^{0}) &= 
		\begin{cases}
		4mn^{2} + 2in^{2} +2n^{2} + 2n &, if \ i=2 \\
		\phi_{g_{3}^{'}} (w_{i}^{0}) - 4mn^{2} + n &, if \ i \neq 2 
		\end{cases} \\
\text{When} \ n = 2, \\
		\phi_{g^{'''}} (w_{i}^{0}) &= 
		\begin{cases}
		\phi_{g^{'}} (w_{i}^{0}) &,  if \ i=2 \\
		\phi_{g^{'}} (w_{i}^{0})+n &, if \ i \neq 2 
		\end{cases} \\
\text{When} \ n \neq 2, \\			
		\phi_{g^{'''}} (w_{i}^{0}) &= \phi_{g^{'}} (w_{i}^{0}) + n
		\end{split}
\end{equation}
and the vertex sum of the vertex $w_{m+i}^{0} \in \gamma^{''}$ for each $i, 1 \leq i \leq m$ and $j, 1 \leq j \leq n,$
\begin{equation} \nonumber
\begin{split}
\phi_{g^{'}} (w_{m+i}^{0}) &= 
    \begin{cases}
        (i-1)n^{2}+n(n+1) &, if \ \text{i is odd}  \\
				mn^{2}+(m-i)n^{2}+n(n+1) &, if \ \text{i is even},
    \end{cases} \\
\phi_{g^{''}} (w_{m+i}^{0}) &= 4mn^{2} - n + \phi_{g_{{2}^{'}}} (w_{m+i}^{0})\\ 
\phi_{g^{'''}} (w_{m+i}^{0}) &= 
    \begin{cases}
        n^{2} &, \ if \ i = 1 \ \text{for} \ n \neq 2   \\
				n(n+1) &, \ if \ i = 1 \ \text{for} \ n = 2  \\
				\phi_{g_{{2}^{'}}} (w_{m+i}^{0}) &, if \ i \neq 1
    \end{cases}
		\end{split}
\end{equation}
Finally, the vertex sum of the vertex $w_{0}^{j} \in \gamma^{'''}$ for $j, 1 \leq j \leq n$ is,
\begin{equation} \nonumber
\begin{split}
\phi_{g^{'}} (w_{0}^{j}) &= 5m^{2}n-mn+(2j-1)m\\
\phi_{g^{''}} (w_{0}^{j})  &= \phi_{g^{'}} (w_{0}^{j}) - 4m^{2}n+1\\
\phi_{g^{'''}} (w_{0}^{j}) &= \phi_{g^{'}} (w_{0}^{j}) + m -1. 
\end{split}
\end{equation}
From the above vertex sums, we have the following observations,\\
(1) From the vertex $w_{i}^{j}$ for each $i, 1 \leq i \leq m$. We have,
\begin{equation} \nonumber
\phi_{{g}^{'}} (w_{1}^{j}) < \phi_{{g}^{'}} (w_{3}^{j}) < \cdots < \phi_{{g}^{'}} (w_{m-1}^{j}) < \phi_{{g}^{'}} (w_{m}^{j}) < \phi_{{g}^{'}} (w_{m-2}^{j}) < \phi_{{g}^{'}} (w_{m-4}^{j}) < \cdots < \phi_{{g}^{'}} (w_{2}^{j})
\end{equation}
further, for each $i, 1 \leq i \leq m$ there exist $j, 1 \leq j \leq n$ such that if $j_{1} < j_{2}$ we have $\phi_{{g}^{'}} (w_{i}^{j_{1}}) < \phi_{{g}^{'}} (w_{i}^{j_{2}})$.\\
(2) From the vertex $w_{m+i}^{j}$ for each $i, 1 \leq i \leq m$. We have,\\
If $m = 4,$
\begin{equation} \nonumber
\phi_{{g}^{'}} (w_{m+2}^{j}) < \phi_{{g}^{'}} (w_{m+4}^{j}) < \phi_{{g}^{'}} (w_{m+1}^{j}) < \phi_{{g}^{'}} (w_{m+3}^{j})
\end{equation}
If $m \geq 6,$
\begin{multline} \nonumber
\phi_{{g}^{'}} (w_{m+2}^{j}) < \phi_{{g}^{'}} (w_{m+4}^{j}) < \cdots < \phi_{{g}^{'}} (w_{m+ 2\left\lfloor \dfrac{m}{4}\right\rfloor}^{j}) < \phi_{{g}^{'}} (w_{2m}^{j}) < \phi_{{g}^{'}} (w_{m+ 2\left\lfloor \dfrac{m}{4}\right\rfloor + 2}^{j}) \\ <  \cdots < \phi_{{g}^{'}} (w_{2m-2}^{j})  < \phi_{{g}^{'}} (w_{2m-1}^{j}) < \cdots < \phi_{{g}^{'}} (w_{m+ 2\left\lceil \dfrac{m}{4}\right\rceil+1}^{j}) \\ < \phi_{{g}^{'}} (w_{m+1}^{j}) < \phi_{{g}^{'}} (w_{m+2\left\lceil \dfrac{m}{4}\right\rceil}^{j}) < \cdots < \phi_{{g}^{'}} (w_{m+3}^{j})
\end{multline}
further, for each $i, 1 \leq i \leq m$ there exist $j, 1 \leq j \leq n$ such that if $j_{1} < j_{2}$ we have $\phi_{{g}^{'}} (w_{m+i}^{j_{1}}) < \phi_{{g}^{'}} (w_{m+i}^{j_{2}})$.\\
(3) From the vertex $w_{i}^{0}$ for each $i, 1 \leq i \leq m,$. We have,  \\
If $m = 4,$
\begin{equation} \nonumber
\phi_{{g}^{'}} (w_{2}^{0}) < \phi_{{g}^{'}} (w_{4}^{0}) < \phi_{{g}^{'}} (w_{1}^{0}) < \phi_{{g}^{'}} (w_{3}^{0})
\end{equation}
If $m \geq 6,$
\begin{multline} \nonumber
\phi_{{g}^{'}} (w_{2}^{0}) < \phi_{{g}^{'}} (w_{4}^{0}) < \cdots < \phi_{{g}^{'}} (w_{2\left\lfloor \dfrac{m}{4}\right\rfloor}^{0}) < \phi_{{g}^{'}} (w_{m}^{0}) < \phi_{{g}^{'}} (w_{2\left\lfloor \dfrac{m}{4}\right\rfloor + 2}^{0}) <  \cdots \\ < \phi_{{g}^{'}} (w_{m-2}^{0}) < \phi_{{g}^{'}} (w_{m-1}^{0})  < \cdots < \phi_{{g}^{'}} (w_{2\left\lceil \dfrac{m}{4}\right\rceil+1}^{0}) \\ < \phi_{{g}^{'}} (w_{1}^{0}) < \phi_{{g}^{'}} (w_{2\left\lceil \dfrac{m}{4}\right\rceil}^{0}) < \cdots < \phi_{{g}^{'}} (w_{3}^{0})
\end{multline}
(4) From the vertex $w_{m+i}^{0}$ for each $i, 1 \leq i \leq m,$. We have,  \\
\begin{multline} \nonumber
\phi_{{g}^{'}} (w_{m+1}^{0}) < \phi_{{g}^{'}} (w_{m+3}^{0}) < \cdots < \phi_{{g}^{'}} (w_{2m-1}^{0}) < \phi_{{g}^{'}} (w_{2m}^{0}) < \phi_{{g}^{'}} (w_{2(m-1)}^{0}) \\ < \phi_{{g}^{'}} (w_{2(m-2)}^{0}) < \cdots <  \phi_{{g}^{'}} (w_{m+4}^{0}) < \phi_{{g}^{'}} (w_{m+2}^{0})
\end{multline}
(5) For each $j, 1 \leq j \leq n,$ the vertex $w_{0}^{j}$ have,
\begin{equation} \nonumber
\phi_{{g}^{'}} (w_{0}^{1}) < \phi_{{g}^{'}} (w_{0}^{2}) < \cdots < \phi_{{g}^{'}} (w_{0}^{n})
\end{equation}
These observations also holds true for ${g}^{''}$ and ${g}^{'''}$.  Therefore, it is clear that no two vertices belongs to the same set of vertices, $\gamma^{'}, \gamma^{''}, \gamma^{'''} $ are equal. And from the above observations and the defined vertex sums, we have the following:
\begin{itemize}
	\item The vertex sum of the vertex $w_{0}^{0}$ is greater than all the other vertex sums.
	\item If the vertex having a odd sum then we have, \\
	$\phi_{{g}^{'}} (w_{2}^{n}) < \phi_{{g}^{'}} (w_{2}^{0}), \phi_{{g}^{''}} (w_{m+2}^{n}) < \phi_{{g}^{''}} (w_{0}^{1}) < \phi_{{g}^{''}} (w_{0}^{n}) < \phi_{{g}^{''}} (w_{m+1}^{0}).$\\
	If $n=2, \phi_{{g}^{'''}} (w_{2}^{n}) < \phi_{{g}^{'''}} (w_{0}^{1}).$\\
	If $n \neq 2, \phi_{{g}^{'''}} (w_{m+3}^{n}) < \phi_{{g}^{'''}} (w_{1}^{1}) < \phi_{{g}^{'''}} (w_{2}^{n}) < \phi_{{g}^{'''}} (w_{0}^{1})$.	
\item If the vertex having a even sum then we have, \\
$\phi_{{g}^{'}} (w_{m+2}^{0}) < \phi_{{g}^{'}} (w_{0}^{1}), \phi_{{g}^{''}} (w_{2}^{n}) < \phi_{{g}^{''}} (w_{2}^{0})$.\\
If $n=2, \phi_{{g}^{'''}} (w_{1}^{n}) > \phi_{{g}^{'''}} (w_{m+2}^{0})$.\\
If $n \neq 2, \phi_{{g}^{'''}} (w_{m+2}^{n}) < \phi_{{g}^{'''}} (w_{2}^{0})$.
\end{itemize}
Hence, for any two distinct vertices we obtained distinct vertex sum. Therefore, $G$ is antimagic. \\
From lemma. \ref{Helmn=1}, theorem. \ref{Helmmodd} and \ref{Helmmeven} we have the following theorem.
\begin{theorem} \label{Helmmain}
The tensor product of Helm $W_m, m \geq 3$ and star $K_{1,n}, n \geq 2$  admits antimagic labeling.
\end{theorem}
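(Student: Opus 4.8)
The plan is to establish Theorem \ref{Helmmain} by a short, exhaustive case analysis on the two parameters $m$ and $n$, assembling the three results already in hand. The key observation is that the full parameter range partitions into mutually exclusive, collectively exhaustive cases, and each case is settled verbatim by one of the preceding statements; no new labelling needs to be constructed. Concretely, the three prior results jointly cover every pair $(m,n)$ with $m \geq 3$ and $n \geq 1$, which in particular contains the family asserted here.

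First I would dispose of the base case $n = 1$: for every $m \geq 3$, the graph $H_m \times K_{1,1}$ is shown antimagic in Lemma \ref{Helmn=1}, so nothing further is needed there. For $n \geq 2$ I would then split on the parity of $m$. When $m$ is odd, and hence $m \geq 3$, the explicit labellings $g^{'}, g^{''}, g^{'''}$ built in the proof of Theorem \ref{Helmmodd} furnish an antimagic labeling (the case split among $g^{'}, g^{''}, g^{'''}$ there being dictated by whether $m \geq n$ and by the parity of $n$). When $m$ is even, and hence $m \geq 4$, the companion labellings in the proof of Theorem \ref{Helmmeven} do the same. Since the parity dichotomy is exhaustive for $n \geq 2$ and the value $n = 1$ is handled separately, every admissible pair $(m,n)$ falls into exactly one case, and the conclusion follows by invoking the appropriate result.

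Because all of the genuine combinatorial work---the edge-label formulas, the induced vertex-sum formulas, and the strict chains guaranteeing distinctness within and across the vertex classes $\gamma^{'}, \gamma^{''}, \gamma^{'''}$---has already been carried out in the lemma and the two theorems, there is no substantive obstacle left to overcome. The one point that still demands care is a bookkeeping check that the three hypothesis ranges together cover $\{(m,n) : m \geq 3,\ n \geq 1\}$ without gaps or overlaps: in particular one must verify the small boundary instances $m \in \{3,4\}$ together with $n \in \{1,2\}$, and confirm that the even-$m$ theorem correctly begins at $m = 4$, the least even value at least $3$, so that no even wheel is inadvertently omitted. Once this partition is certified complete, citing the three results in turn finishes the proof.
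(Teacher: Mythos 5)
Your proposal is correct and coincides with the paper's own derivation: the paper obtains this theorem exactly by combining Lemma \ref{Helmn=1} with Theorems \ref{Helmmodd} and \ref{Helmmeven} according to the parity of $m$, with no new labelling constructed. The only cosmetic difference is that you also fold in the $n=1$ case, which the stated range $n \geq 2$ does not strictly require (though the paper cites the lemma too), so your bookkeeping check of the partition is, if anything, slightly more careful than the paper's one-line assembly.
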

\subsection{Antimagicness for tensor product of flower and star}
\noindent  Let the vertex set of the tensor product of $Fl_{m} \times K_{1,n} = G$ is,
\begin{equation} \nonumber
V(G) = \left\{(u_i, v_j) = w_{i}^{j}, 0 \leq i \leq 2m,  0 \leq j \leq n\right\}
\end{equation}
Note that, the tensor product of helm and star is the subgraph of the tensor product of flower and star. So, we can write the vertex set of $G$ can be written using the vertex set of tensor product of helm and star, 
\begin{equation} \nonumber
V(G) = \left\{w_{0}^{0}\right\} \cup \gamma^{'} \cup  \gamma^{''} \cup  \gamma^{'''}
\end{equation}
where, $\gamma^{'}, \gamma^{''}$ and $\gamma^{'''}$ are defined in section $3.2$.\\
The edge set of $G$ is defined as,
\begin{align*} 
E(G) = \eta^{'} \cup \eta^{''} \cup \eta^{'''}
\end{align*}
where,
$\eta^{'} = \delta^{'} \cup \left\{(w_{0}^{0}, w_{m+i}^{j}), 1 \leq i \leq m, 1 \leq j \leq n\right\}, \eta^{''} = \delta^{''} $ and
\\
$\eta^{'''}  = \delta^{'''} \cup \left\{(w_{m+i}^{0},w_{0}^{j}), 1 \leq i \leq m, 1 \leq j \leq n\right\}$
where $\delta^{'}, \delta^{''}$ and $\delta^{'''}$ are defined in section $3.2.$ \\
Note that $\left|E(G)\right| = 8mn.$ We begin the proof with the following lemma.
\begin{lemma} \label{flowern=1}
The graph $G= Fl_{m} \times K_{1,1}$ is antimagic for $m \geq 3.$
\end{lemma}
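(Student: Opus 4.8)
The plan is to exhibit an explicit bijection $f_1 : E(G) \to \{1,2,\dots,8m\}$ and read off every vertex sum, in direct analogy with the helm base case of Lemma~\ref{Helmn=1}. The key structural remark is that $Fl_m$ is the helm $H_m$ together with the $m$ extra edges $u_0u_{m+i}$; hence in the tensor product the edge set $E(G)=\eta'\cup\eta''\cup\eta'''$ is exactly the helm edge set $\delta'\cup\delta''\cup\delta'''$ augmented by the $2m$ \emph{flower edges} $(w_0^0,w_{m+i}^1)$ and $(w_{m+i}^0,w_0^1)$, $1\le i\le m$. Adding these raises each outer vertex $w_{m+i}^0,w_{m+i}^1$ from degree $1$ to degree $2$ and each hub $w_0^0,w_0^1$ from degree $m$ to degree $2m$, while every cycle vertex $w_i^0,w_i^1$ ($1\le i\le m$) keeps the neighbourhood it had in $H_m$.

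First I would keep the labels $\{1,\dots,6m\}$ on the helm subgraph exactly as in Lemma~\ref{Helmn=1}, and reserve the top block $\{6m+1,\dots,8m\}$ for the flower edges, split by parity: the $m$ \emph{even} labels of the block go to the edges $(w_0^0,w_{m+i}^1)$ incident to $w_0^0$, and the $m$ \emph{odd} labels go to the edges $(w_{m+i}^0,w_0^1)$ incident to $w_0^1$. Since the cycle vertices are untouched, their sums are unchanged, so $\phi_{f_1}(w_i^1)=6m+12i-5$ (odd) and the $\phi_{f_1}(w_i^0)$ retain their helm values, and these remain mutually distinct by Lemma~\ref{Helmn=1}. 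Writing $S_0,S_1$ for the sums of the even and odd halves of the block, one has $\phi_{f_1}(w_0^0)=3m^2+m+S_0$ and $\phi_{f_1}(w_0^1)=5m^2+m+S_1$ with $S_0-S_1=m$, whence $\phi_{f_1}(w_0^1)-\phi_{f_1}(w_0^0)=2m^2-m>0$. As both hub sums are $\Theta(m^2)$ while every other sum is $O(m)$, the two hubs are distinct and are automatically the two largest sums, disposing of them with no case analysis.

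It remains to treat the outer vertices, whose sums are $\phi_{f_1}(w_{m+i}^1)=f_1(w_i^0,w_{m+i}^1)+f_1(w_0^0,w_{m+i}^1)$ and $\phi_{f_1}(w_{m+i}^0)=(2i-1)+f_1(w_{m+i}^0,w_0^1)$. Here the helm pendant terms $f_1(w_i^0,w_{m+i}^1)$ run through the even values $\{2,4,\dots,2m\}$ and the terms $2i-1$ through the odd values $\{1,3,\dots,2m-1\}$; combined with the parity split above, every outer sum is \emph{even}, which immediately separates the outer vertices from the odd-summed cycle vertices $w_i^1$. Within each of the two outer families I would assign the flower labels so that the sums form a strictly increasing chain in $i$ (for each parity of the index), exactly as the ordering chains already recorded for $w_i^1$ and $w_i^0$ in Lemma~\ref{Helmn=1}; this makes the outer vertices mutually distinct, and separation from the $\Theta(m^2)$ hubs is automatic.

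The main obstacle is the one remaining $O(m)$-range comparison: the degree-$2$ outer sums lie in a window of width $O(m)$ around $8m$–$10m$, which overlaps the ranges of the even-valued $w_i^0$ sums (such as $8m+12i-8$, $8m+12i$ and $14m\pm$). Parity kills the collisions with $w_i^1$ but not with $w_i^0$, so the flower labels are not free but must be \emph{tuned} — and split according to the parity of $m$, with the exceptional indices $i=1$ and $i=m$ handled separately as in the helm labeling — so that the outer sums interleave with rather than coincide with the $w_i^0$ sums. I expect this parity-driven interleaving, closed off by small-$m$ checks ($m=3,4,5$) and a uniform argument for $m\ge 7$ in the style of the enumeration ending Lemma~\ref{Helmn=1}, to account for essentially all of the technical work, after which every pair of distinct vertices of $G=Fl_m\times K_{1,1}$ receives distinct sums and $G$ is antimagic.
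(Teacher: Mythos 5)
Your overall frame (reuse the helm labeling $f_1$ of Lemma \ref{Helmn=1} and handle the $2m$ new flower edges separately) matches the paper's strategy in spirit, but your specific placement of labels is fatally wrong, and the "tuning" you defer to the end is provably impossible. With the helm labels frozen on $\{1,\dots,6m\}$ and the top block $\{6m+1,\dots,8m\}$ split by parity as you prescribe, count the outer sums: $\phi_{f_1}(w_{m+i}^1)$ is a sum of one value from $\{2,4,\dots,2m\}$ (the pendant labels $f_1(w_i^0,w_{m+i}^1)$, which as you note exhaust the even numbers up to $2m$) and one even block label from $\{6m+2,\dots,8m\}$, while $\phi_{f_1}(w_{m+i}^0)$ is a sum of one value from $\{1,3,\dots,2m-1\}$ and one odd block label from $\{6m+1,\dots,8m-1\}$. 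All $2m$ outer sums are therefore even numbers lying in $[6m+2,10m]$, an interval containing exactly $2m$ even numbers; moreover the grand total of the outer sums is fixed at $m(2m+1)+\sum_{k=6m+1}^{8m}k=16m^2+2m$, which equals the sum of all even numbers in $[6m+2,10m]$. Hence if the outer sums are to be pairwise distinct they must occupy \emph{every} even value $6m+2,6m+4,\dots,10m$ — there is no freedom to "interleave," no matter how you permute the block labels among the flower edges. Since you keep the cycle-vertex sums unchanged, $\phi_{f_1}(w_2^0)=8m+14$ is an even number in this window whenever $m\geq 7$, so a collision between $w_2^0$ and some outer vertex is unavoidable for all $m\geq 7$ — precisely the regime where you hoped a uniform argument would close the proof.

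The paper's proof of Lemma \ref{flowern=1} avoids this trap by distributing the labels in the opposite way: it shifts the bulk of the helm labels upward (most edges get $2m+f_1(\cdot)$), freeing the \emph{small} labels for the edges at the degree-two outer vertices, e.g.\ $f_2(w_i^1,w_{m+i}^0)=2i$ and $f_2(w_{m+i}^0,w_0^1)=2m+2i$, so that the outer sums land in $\{2m+2,2m+4,\dots,6m\}$ — a window that sits entirely \emph{below} the (shifted) cycle-vertex sums rather than overlapping them. (The two outer families then even separate automatically: $\phi_{f_2}(w_{m+i}^0)=2m+4i\equiv 2m \pmod 4$ while $\phi_{f_2}(w_{m+i}^1)=2m+2f_1(w_i^0,w_{m+i}^1)-2\equiv 2m+2 \pmod 4$.) To repair your proposal you would have to abandon either the frozen helm labels or the top-block assignment; as written, the missing step is not technical bookkeeping but a genuine obstruction.
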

\begin{proof}
We define the labeling $f_{2}: E(G) \rightarrow \left\{1,2, \cdots, 8m\right\}$ with the help of the labeling  function $f_{1}$ defined in proof of lemma \ref{Helmn=1} . Now, let us label the edges of the graph $G$ as defined as follows,\\
The edges of $\eta^{'}$ is labelled as,
\begin{align*} \nonumber
f_{2} (w_{0}^{0}, w_{i}^{1})=& 2m+ f_{1} (w_{0}^{0},w_{i}^{1})   \\
f_{2} (w_{0}^{0}, w_{m+i}^{1})=& 2m+ f_{1} (w_{i}^{0},w_{m+i}^{1}) - 1
\end{align*}
The edges of $\eta^{''}$ is labelled as,
\begin{align*} \nonumber
f_{2} (w_{i}^{1}, w_{i+1}^{0})=& 2m+ f_{1} (w_{i}^{1},w_{i+1}^{1})  \\
f_{2} (w_{1}^{1}, w_{m}^{0})=& 2m+ f_{1} (w_{i}^{1},w_{i+1}^{0}) \\
f_{2} (w_{m}^{1}, w_{1}^{0})=& 8m-3 \\
f_{2} (w_{i}^{0}, w_{i+1}^{1})=& 2m+ f_{1} (w_{i}^{1},w_{i+1}^{1})\\
f_{2} (w_{i}^{1}, w_{m+i}^{0})=& 2i \\
f_{2} (w_{i}^{0}, w_{m+i}^{1})=& f_{1} (w_{i}^{0},w_{m+i}^{1} ) - 1
\end{align*}
The edges of $\eta^{'''}$ is labelled as,
\begin{align*} \nonumber
f_{2} (w_{m+i}^{0}, w_{0}^{1})=& 2m+ 2i \\
f_{2} (w_{i}^{0}, w_{0}^{1})=& 2m+ f_{1} (w_{i}^{0},w_{0}^{1} ) 
\end{align*}
The observation $(1)$ as given in Lemma. (\ref{Helmn=1}), the same holds true under the function $f_{2}$. Clearly, the above vertex sums are distinct due to the following reasoning:
\begin{enumerate}
	\item The degree two vertices $w_{m+i}^{j}$ and $w_{m+i}^{0}$  is lesser from all the other vertices and it is distinct between themselves (the vertex sum of these vertices ranges from the set $\left\{2m+2,2m+4,\cdots,6m\right\}$).
	\item If $m$ is odd and having odd sum then we have,\\
	For $m = 3, \phi_{f_{2}} (w_{3}^{0}) <  \phi_{f_{2}} (w_{2}^{0}) <  \phi_{f_{2}} (w_{1}^{0}) <  \phi_{f_{2}} (w_{0}^{0}) $.\\
	For $m \geq 5,  \phi_{f_{2}} (w_{m-1}^{0}) <  \phi_{f_{2}} (w_{0}^{0})$.
	\item If $m$ is odd and having even sum then we have,\\
	For $m = 3, \phi_{f_{2}} (w_{m+1}^{1 }) < \phi_{f_{2}} (w_{2m}^{0 }) < \phi_{f_{2}} (w_{1}^{1 }) < \phi_{f_{2}} (w_{3}^{1 }) < \phi_{f_{2}} (w_{0}^{1 })$.\\
	For $m \geq 5,  \phi_{f_{2}} (w_{2m-1}^{1})  < \phi_{f_{2}} (w_{2m}^{0}) < \phi_{f_{2}} (w_{1}^{1}) < \phi_{f_{2}} (w_{m}^{1})  < \phi_{f_{2}} (w_{0}^{1})$.
	\item If $m$ is even and having even sum then we have,
	$\phi_{f_{2}} (w_{2m-1}^{1}) < \phi_{f_{2}} (w_{2m}^{0}) < \phi_{f_{2}} (w_{1}^{1}) < \phi_{f_{2}} (w_{m}^{1}) < \phi_{f_{2}} (w_{0}^{0}) < \phi_{f_{2}} (w_{0}^{1})$ (and $\phi_{f_{2}} (w_{i}^{0})$ is the only odd sum).	
\end{enumerate}
Thus, any two distinct vertices of $G$ getting distinct vertex sums. Therefore, $G$ is antimagic.
\end{proof}
Next, we prove the main cases for the values $m \geq 3$ and $n \geq 2$.
\begin{theorem}  \label{flowermodd}
The tensor product of Flower $Fl_m, m \geq 3$ and star $K_{1,n}$ where $m$ is odd and $n \geq 2$  admits antimagic labeling.
\end{theorem}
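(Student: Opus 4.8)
The plan is to construct the flower labeling by extending the helm labelings $g^{'},g^{''},g^{'''}$ from the proof of Theorem~\ref{Helmmodd}, in exactly the way the $n=1$ flower labeling $f_{2}$ was built from the helm labeling $f_{1}$ in Lemma~\ref{flowern=1}. Since $Fl_{m}\times K_{1,n}$ has the same vertex partition $\{w_{0}^{0}\}\cup\gamma^{'}\cup\gamma^{''}\cup\gamma^{'''}$ as $H_{m}\times K_{1,n}$, and its edge set is $E(G)=\eta^{'}\cup\eta^{''}\cup\eta^{'''}$ with $\eta^{''}=\delta^{''}$ and only the $2mn$ new edges $\{(w_{0}^{0},w_{m+i}^{j})\}$ (in $\eta^{'}$) and $\{(w_{m+i}^{0},w_{0}^{j})\}$ (in $\eta^{'''}$) added on top of the $6mn$ helm edges, I would keep the three-case split $G^{'},G^{''},G^{'''}$ (according to $m\ge n$ with $n$ odd, $m<n$ with $n$ odd, or $n$ even) and define flower labelings $h^{'},h^{''},h^{'''}$ that reproduce the helm labels on the $\delta^{''}$ edges, uniformly shifted so as to reserve a top block of $2mn$ values for the new edges.

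First I would assign those new labels. Following $f_{2}$, the $mn$ edges $(w_{0}^{0},w_{m+i}^{j})$ and the $mn$ edges $(w_{m+i}^{0},w_{0}^{j})$ receive the large values, placed by an affine shift of the helm spoke-labels $g^{'}(w_{i}^{0},w_{m+i}^{j})$ and of the $\delta^{'''}$ labels. Each new edge is incident to one of $w_{0}^{0}$, $w_{m+i}^{j}$, $w_{m+i}^{0}$, $w_{0}^{j}$, so only these four classes of vertex sums change: the central sum $\phi_{h^{'}}(w_{0}^{0})$ absorbs the entire top block and becomes the unique global maximum; each $w_{m+i}^{j}$ and each $w_{0}^{j}$ gains one extra summand; and the $w_{m+i}^{0}$ sums shift in a controlled way. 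Crucially the degree-$4$ sums $\phi(w_{i}^{j})$ and the sums $\phi(w_{i}^{0})$ are untouched, so the monotone chains and parity data recorded in the observations of Theorem~\ref{Helmmodd} carry over verbatim for those classes.

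Next I would verify distinctness by the same scheme used throughout the helm proofs. Within each class I re-establish a monotone chain of the recomputed sums: the chains for $w_{i}^{j}$, $w_{i}^{0}$, $w_{0}^{j}$ are inherited, while the chains for the now degree-two vertices $w_{m+i}^{j}$ and for $w_{m+i}^{0}$ must be re-derived from the new formulas, as in observations (2) and (4) of the proof of Theorem~\ref{Helmmeven}. Across classes I split the vertices into those of odd and those of even vertex sum, confirm that $w_{0}^{0}$ dominates everything, and then check the finitely many boundary inequalities between consecutive equal-parity blocks, handling the small case $m=3$ (and the corner $n=2$) individually as was done before.

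The main obstacle is the outer vertices $w_{m+i}^{j}$. In the helm these are tensor-degree-$1$ (pendant-type) vertices, so their sums are single edge labels and are automatically distinct; the new spoke $(w_{0}^{0},w_{m+i}^{j})$ raises them to tensor-degree $2$, so each sum becomes a sum of two labels and that automatic distinctness is lost. The delicate point is to choose the $\eta^{'}$ labels so that the $mn$ sums of the form $g^{'}(w_{m+i}^{j},w_{i}^{0})+h^{'}(w_{0}^{0},w_{m+i}^{j})$ stay pairwise distinct, remain below the smallest degree-$4$ sum, and avoid the likewise small sums of the $w_{m+i}^{0}$; since these lowest-degree vertices crowd together near the bottom of the range, the affine shift has to be tuned precisely, and the $m=3$ and $n=2$ corners need separate checks exactly as in Theorem~\ref{Helmmodd}.
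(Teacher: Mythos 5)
Your overall architecture matches the paper's: the published proof of Theorem~\ref{flowermodd} does exactly what you outline, keeping the three-case split $G^{'},G^{''},G^{'''}$, reusing the helm labelings $g^{'},g^{''},g^{'''}$ from Theorem~\ref{Helmmodd} on the $6mn$ inherited edges, reserving a block of $2mn$ labels for the new edges $(w_{0}^{0},w_{m+i}^{j})$ and $(w_{m+i}^{0},w_{0}^{j})$, and then re-running the monotone-chain and parity comparisons. But your one concrete design decision goes the wrong way, and it breaks the separation claim your plan relies on. You reserve the \emph{top} block of $2mn$ values for the new edges; the paper reserves the \emph{bottom} block $\left\{1,\ldots,2mn\right\}$ for them and shifts the helm labels up by $2mn$ (e.g.\ $h^{'}(w_{i}^{j},w_{m+i}^{0})=2mn+g^{'}(w_{i}^{j},w_{m+i}^{0})$, with $h^{'}(w_{0}^{0},w_{m+i}^{j})$ of the form $(i-2)n+2j-1$, etc.). With the bottom-block choice the degree-two outer sums satisfy $\phi_{h^{'}}(w_{m+i}^{j})\leq 6mn+4n-1$ while every degree-four sum satisfies $\phi_{h^{'}}(w_{i}^{j})=8mn+\phi_{g^{'}}(w_{i}^{j})\geq 16mn+n+5$, giving clean separation for free. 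With your top-block choice a new label alone is at least $6mn+1$, the pendant helm labels $g^{'}(w_{m+i}^{j},w_{i}^{0})$ run up to about $2mn+n$, and the smallest degree-four sum is only about $8mn+n+5$; so your stated requirement that the $mn$ sums $g^{'}(w_{m+i}^{j},w_{i}^{0})+h^{'}(w_{0}^{0},w_{m+i}^{j})$ ``remain below the smallest degree-$4$ sum'' forces $mn$ pairwise distinct values into a window of width roughly $2mn$, with the pairing constrained by the already-fixed pendant labels. That is not impossible in principle, but it is exactly the nontrivial construction the proof must supply, and your proposal leaves it as ``the affine shift has to be tuned precisely'' without exhibiting labels or verifying a single inequality. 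What you have is a plan that defers precisely the content that constitutes the paper's proof: the explicit formulas for $h^{'},h^{''},h^{'''}$ on the new edges and the case-by-case distinctness checks (including $m=3$ and $n=2$).

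There is also a concrete bookkeeping error in your accounting of which sums change: you assert that each $w_{0}^{j}$ ``gains one extra summand,'' but $w_{0}^{j}$ is incident to all $m$ new edges $(w_{m+i}^{0},w_{0}^{j})$, $1\leq i\leq m$, so it gains $m$ new summands; likewise each $w_{m+i}^{0}$ gains $n$ new summands, not a mere ``controlled shift.'' This matters because the cross-class comparisons at the end of the paper's proof (e.g.\ $\phi_{{h}^{'}}(w_{2m-2}^{n})<\phi_{{h}^{'}}(w_{m+1}^{0})<\phi_{{h}^{'}}(w_{2m-1}^{0})<\phi_{{h}^{'}}(w_{0}^{1})$ among odd sums) depend on the exact magnitudes of these aggregated contributions, which scale like $mn$ and $n^{2}$ respectively; underestimating them by a factor of $m$ or $n$ would invalidate the boundary inequalities you intend to inherit. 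So: right strategy, same skeleton as the paper, but the inverted block placement undercuts your own separation argument, and the proposal as written is an outline rather than a proof.
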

\begin{theorem}  \label{flowermeven}
The tensor product of Flower $Fl_m, m \geq 4$ and star $K_{1,n}$ where $m$ is even and $n \geq 2$  admits antimagic labeling.
\end{theorem}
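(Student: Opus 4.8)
The plan is to reproduce, almost verbatim, the strategy of Theorem~\ref{Helmmeven}, exploiting the fact that $Fl_m \times K_{1,n}$ contains $H_m \times K_{1,n}$ as a spanning subgraph: here $\eta' \supset \delta'$, $\eta'' = \delta''$, and $\eta''' \supset \delta'''$, so the only genuinely new edges are the $2mn$ flower edges $(w_0^0, w_{m+i}^j)$ in $\eta'$ and $(w_{m+i}^0, w_0^j)$ in $\eta'''$. Since $|E(Fl_m \times K_{1,n})| = 8mn$ while $|E(H_m \times K_{1,n})| = 6mn$, I would reserve the block of small labels $\{1,\dots,2mn\}$ for these new edges and push the helm labeling upward, defining the flower labeling on the $\delta$-edges essentially as $g'(\cdot)+2mn$, $g''(\cdot)+2mn$, $g'''(\cdot)+2mn$, up to the local adjustments seen in Lemma~\ref{flowern=1}, where $f_2$ was built from $f_1$ by a shift of $2m$ with a few corrected entries. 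The same three-case split $G',G'',G'''$ governed by $m\ge n$ or $m<n$ and the parity of $n$ would be kept, and the vacated labels $\{1,\dots,2mn\}$ would be assigned to the two new edge families monotonically in $j$.

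I would then recompute every vertex sum. Any vertex whose incident edge set lies entirely in the helm subgraph keeps its helm sum shifted by its degree times $2mn$; consequently the within-class chains recorded in Observations (1)--(5) of the proof of Theorem~\ref{Helmmeven} survive unchanged after this uniform shift, since a common additive shift preserves strict order within a fixed-degree class. Only four families change shape, namely the ones incident to a new flower edge: the hub $w_0^0$ (which now also collects all $mn$ edges $(w_0^0,w_{m+i}^j)$), each $w_0^j$ (which now also collects the $m$ edges $(w_{m+i}^0,w_0^j)$), each degree-two vertex $w_{m+i}^j$, and each vertex $w_{m+i}^0$. For these I would record the new closed forms explicitly in the three cases $g',g'',g'''$ and verify the refreshed monotone chains, following the helm layout line by line.

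The argument then closes by the same two-step separation used throughout the paper. First, $\phi(w_0^0)$ strictly dominates every other sum, which is immediate because $w_0^0$ has the largest degree $2mn$ and inherits the top block of helm labels. Second, the remaining vertices split by the parity of their sums into an odd chain and an even chain, and I would supply the handful of gluing inequalities, of the type $\phi_{g'}(w_{m+2}^n) < \phi_{g'}(w_0^1)$ and $\phi_{g'}(w_2^n) < \phi_{g'}(w_2^0)$ together with their $g''$ and $g'''$ analogues, needed to interleave the two chains without overlap, exactly as in Theorem~\ref{Helmmeven}.

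I expect the main difficulty to be bookkeeping rather than anything conceptual. The delicate point is to choose the labeling of the two new edge families so that the global map is a genuine bijection onto $\{1,\dots,8mn\}$ while the inflated sums of $w_0^j$ and of the outer vertices $w_{m+i}^0$, precisely the classes the flower edges enlarge the most, still interleave correctly with the untouched helm classes and respect the parity split. Because the even-helm labeling already carries the quantities $\lfloor m/4\rfloor$ and $\lceil m/4\rceil$, this verification is heaviest across the residues of $m$ modulo $4$; I would organize it by reusing, case for case, the inequalities already established in Theorem~\ref{Helmmeven} and checking only the additional central-edge contributions.
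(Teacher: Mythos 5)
Your proposal matches the paper's proof essentially step for step: the paper also builds $h',h'',h'''$ from the helm labelings of Theorem~\ref{Helmmeven} by shifting the $\delta$-edges (e.g.\ $h'(w_0^0,w_i^j)=g'(w_0^0,w_i^j)+2mn$, with the same few local corrections you anticipate), assigns the freed small labels to the new families $(w_0^0,w_{m+i}^j)$ and $(w_{m+i}^0,w_0^j)$ with the same $\lfloor m/4\rfloor$, $\lceil m/4\rceil$ case structure, reuses Observations (1)--(4) of the helm proof for the unchanged chains, and closes with exactly the kind of parity-split gluing inequalities you describe. The approach, the three-case split $G',G'',G'''$, and the identification of the four affected vertex classes are all the same, so this is the paper's argument in outline.
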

Before proving theorem \ref{flowermodd} and theorem \ref{flowermeven}, we classify the tensor product of graph $G$ from Flower $Fl_{m}, m \geq 3$ and star $K_{1,n}, n \geq 2$ into three cases based on the values of $m$ and $n$.
\begin{equation}   \nonumber
G \cong
     \begin{cases}
        G^{'} &, if \ m \geq n \ \& \ n \ \text{is odd}   \\
				G^{''} &, if \ m < n \ \& \ n \ \text{is odd}   \\
        G^{'''} &, if  \ n \ \text{is even}, 
    \end{cases}
\end{equation}
In the proof of theorem \ref{flowermodd} and \ref{flowermeven}, we use the labelling of $G^{'}, G^{''}$ and $G^{'''}$ as $h^{'}, h^{''}$ and $h^{'''}$ respectively.\\
\textbf{Proof of theorem \ref{flowermodd}: } The labeling of the edges of graph $G$ are given in the following steps and we defined the labeling with the help of the labeling defined in the proof of theorem. \ref{Helmmodd}.\\
\emph{Step 1: } Labeling the edges of $(w_{0}^{0},w_{i}^{j}) \in \eta^{'}$ for $1 \leq i \leq m$ and $1 \leq j \leq n$ is defined as follows:
\begin{equation}   \nonumber
\begin{split}
h^{'} (w_{0}^{0},w_{i}^{j}) &= 2mn+g^{'} (w_{0}^{0},w_{i}^{j})\\
h^{''} (w_{0}^{0},w_{i}^{j}) &= h^{'} (w_{0}^{0},w_{i}^{j})   - 1 \\
h^{'''} (w_{0}^{0},w_{i}^{j}) &=
    \begin{cases}
        h^{'} (w_{0}^{0},w_{i}^{j}) &, if \  m \geq 5  \\
				 h^{'} (w_{0}^{0},w_{i}^{j}) - 1 &, if \  m = 3 
    \end{cases}
		\end{split}
\end{equation}
and label the edges of $(w_{0}^{0},w_{m+i}^{j})$ for each $i, 1 \leq i \leq m$ and $j, 1 \leq j \leq n,$
\begin{equation} \nonumber
\begin{split}
h^{'} (w_{0}^{0},w_{m+i}^{j}) & =  \begin{cases}
        (i-2)n+2j-1 &, if \ i \ \text{is even}   \\
				 n(m-1) + 2j-1 &, if \  i = m \\
				n(m-1) + 2n+(2j-1)+(i-1)n &, if \ i \ \text{is odd}
    \end{cases}\\
h^{''} (w_{0}^{0},w_{m+i}^{j}) &= h^{'} (w_{0}^{0},w_{m+i}^{j}) + 1\\
h^{'''} (w_{0}^{0},w_{m+i}^{j}) &=
    \begin{cases}
        h^{'} (w_{0}^{0},w_{m+i}^{j}) &, if \ m \geq 5   \\
				 2j &, if \  i = 2 \ \& \ m = 3\\
				h^{'} (w_{0}^{0},w_{m+i}^{j}) &, if \ i \neq 2 \ \& \ m = 3
    \end{cases}
\end{split}
\end{equation}
\emph{Step 2: } Labeling the edges of $\eta^{''}$ as follows:\\
First, label the edges of $(w_{i}^{j}, w_{m+i}^{0})$ for each $i, 1 \leq i \leq m$ and $1 \leq j \leq n$,\\
\begin{equation}   \nonumber
\begin{split}
h^{'} (w_{i}^{j}, w_{m+i}^{0}) &=  2mn + g^{'} (w_{i}^{j}, w_{m+i}^{0}) \\
h^{''} (w_{i}^{j}, w_{m+i}^{0}) &= 4mn + 1 + h^{'} (w_{i}^{j}, w_{m+i}^{0}) \\
h^{'''} (w_{i}^{j}, w_{m+i}^{0}) & = \begin{cases}
        h^{'} (w_{i}^{j}, w_{m+i}^{0}) &, if \  m \geq 5   \\
        4mn+1+ h^{'} (w_{i}^{j}, w_{m+i}^{0}) &, if \  m = 3 
    \end{cases}
		\end{split}
\end{equation}
Next, label the edges of $(w_{m+i}^{j}, w_{i}^{0})$ for each $i, 1 \leq i \leq m$ and $1 \leq j \leq n$,
\begin{equation}   \nonumber
\begin{split}
h^{'} (w_{m+i}^{j}, w_{i}^{0}) &=  2mn + g^{'} (w_{m+i}^{j}, w_{i}^{0}) \\
 h^{''} (w_{m+i}^{j}, w_{i}^{0}) &=  h^{'} (w_{m+i}^{j}, w_{i}^{0}) - 1 \\
    h^{'''} (w_{m+i}^{j}, w_{i}^{0}) &=
		\begin{cases}
        h^{'} (w_{m+i}^{j}, w_{i}^{0}) &, if \ m \geq 5  \\
				2mn+2j &, if \ i = 2 \ \& \ m = 3 \\
				 h^{'} (w_{m+i}^{j}, w_{i}^{0}) - 1 &, if \ i \neq 2 \ \& \ m = 3 
    \end{cases}
\end{split}
\end{equation}
Finally, we label the remaining edges of $\eta^{''}$ for each $i, 1 \leq i \leq m$ and for $j, 1 \leq j \leq n,$
\begin{equation}   \nonumber
\begin{split}
h^{'} (w_{i}^{0}, w_{i+1}^{j}) &= 2mn + g^{'} (w_{i}^{0}, w_{i+1}^{j}) \\
h^{''} (w_{i}^{0}, w_{i+1}^{j}) &= h^{'} (w_{i}^{0}, w_{i+1}^{j}) \\
h^{'''} (w_{i}^{0}, w_{i+1}^{j}) &= h^{'} (w_{i}^{0}, w_{i+1}^{j}) \\
h^{'} (w_{i}^{j}, w_{i+1}^{0}) &= 2mn + g^{'} (w_{i}^{j}, w_{i+1}^{0}) \\
h^{''} (w_{i}^{j}, w_{i+1}^{0}) &= h^{'} (w_{i}^{j}, w_{i+1}^{0}) \\
h^{'''} (w_{i}^{j}, w_{i+1}^{0}) &= h^{'} (w_{i}^{j}, w_{i+1}^{0}) \\
h^{'} (w_{1}^{j}, w_{m}^{0}) &= 4mn+j \\
h^{''} (w_{1}^{j}, w_{m}^{0}) &= h^{'} (w_{1}^{j}, w_{m}^{0}) \\
h^{'''} (w_{1}^{j}, w_{m}^{0}) &= h^{'} (w_{1}^{j}, w_{m}^{0}) \\
h^{'} (w_{m}^{j}, w_{1}^{0}) &= 5mn+j \\
h^{''} (w_{m}^{j}, w_{1}^{0}) &= h^{'} (w_{m}^{j}, w_{1}^{0}) \\
h^{'''} (w_{m}^{j}, w_{1}^{0}) &= h^{'} (w_{m}^{j}, w_{1}^{0}) 	
\end{split}
\end{equation}
\emph{Step 3: }  We label the edges of $(w_{i}^{0}, w_{0}^{j}) \in \eta^{'''}$ as follows:\\
Label the edges of $(w_{i}^{0}, w_{0}^{j}) \in \eta^{'''}$ for each $i, 1 \leq i \leq m$ and $j, 1 \leq j \leq n,$
\begin{equation}   \nonumber
\begin{split}
h^{'} (w_{i}^{0}, w_{0}^{j}) &= 2mn+ g^{'} (w_{i}^{0}, w_{0}^{j}) \\
h^{''} (w_{i}^{0}, w_{0}^{j}) &= h^{'} (w_{i}^{0}, w_{0}^{j}) - 6mn \\
h^{'''} (w_{i}^{0}, w_{0}^{j}) &= \begin{cases}
		    h^{'} (w_{i}^{0}, w_{0}^{j}) &, if \  m \geq 5 \\
				h^{'} (w_{i}^{0}, w_{0}^{j}) - 6mn &, if \ i \neq  2  \ \& \ m = 3 \\
				h^{'} (w_{i}^{0}, w_{0}^{j}) - 6mn + 1 &, if \ i \neq  2  \ \& \ m = 3 
    \end{cases}
\end{split} 
\end{equation}
Finally we label the remaining edges of $(w_{m+i}^{0},w_{0}^{j})$ for each $i, 1 \leq i \leq m$ and for $j, 1 \leq j \leq n,$
\begin{equation}   \nonumber
\begin{split}
h^{'} (w_{m+i}^{0}, w_{0}^{j}) &= 
    \begin{cases}
		    (i-1)n+2j &, if \ i \ \text{is odd} \\
				mn+n+(i-2)n+2j &, if \ i \ \text{is even}
    \end{cases}\\
h^{''} (w_{m+i}^{0}, w_{0}^{j}) &= 2mn+ h^{'} (w_{m+i}^{0}, w_{0}^{j})\\
h^{'''} (w_{m+i}^{0}, w_{0}^{j}) &=  
\begin{cases}
		    h^{'} (w_{m+i}^{0}, w_{0}^{j}) &, if \ m \geq 5 \\
				 h^{'} (w_{m+i}^{0}, w_{0}^{j}) + 2mn - 1 &, if \ i = 1 \ \& \ m =3 \\
				 h^{'} (w_{m+i}^{0}, w_{0}^{j}) + 2mn, if \ i \neq 1 \ \& \ m =3 
    \end{cases}\\
\end{split}
\end{equation}
From the above labeling schemes, we obtained the following vertex sums for every vertex $v$.\\
The vertex sum of vertex $(w_{0}^{0})$ is,
\begin{equation} \nonumber
\begin{split}
\phi_{h^{'}} (w_{0}^{0}) = \phi_{h^{''}} (w_{0}^{0}) & =   8m^{2} n^{2} + mn\\
\phi_{h^{'''}} (w_{0}^{0}) & =   
\begin{cases}
		    \phi_{h^{'}} (w_{0}^{0}) &, if \ m \geq 5 \\
				\phi_{h^{'}} (w_{0}^{0}) - mn + n &, if \ m = 3
    \end{cases}
\end{split}
\end{equation}
For each $i, 1 \leq i \leq m$ and $1 \leq j \leq n, $ the vertex sum of the vertex $(w_{i}^{j})$ is,
\begin{equation}  \nonumber
\begin{split}
\phi_{{h}^{'}} (w_{i}^{j}) &= 8mn+ \phi_{{g}^{'}} (w_{i}^{j}) \\
\phi_{{h}^{''}} (w_{i}^{j}) &= 4mn+ \phi_{{h}^{'}} (w_{i}^{j}) \\
\phi_{{h}^{'''}} (w_{i}^{j}) &= 
\begin{cases}
		    \phi_{{h}^{'}} (w_{i}^{j}) &, if \ m \geq 5 \\
				\phi_{{h}^{'}} (w_{i}^{j}) + 4mn &, if \ m = 3
 \end{cases}
\end{split}
\end{equation}
and the vertex sum of the vertex $w_{m+i}^{j}$ is,
\begin{equation}  \nonumber
\begin{split}
\phi_{{h}^{'}} (w_{m+i}^{j}) &= 
\begin{cases}
		    2mn+4j-1+2(i-2)n &, if \ i \ \text{is even} \\
				2(m-1)n+4j+2mn-1 &, if \ i = m \\
				2n(m+i)+4j-1+2mn &, if \ i \ \text{is odd}
 \end{cases}\\
\phi_{{h}^{''}} (w_{m+i}^{j}) &= \phi_{{h}^{'}} (w_{m+i}^{j}) \\
\phi_{{h}^{'''}} (w_{m+i}^{j}) &= 
\begin{cases}
		    \phi_{{h}^{'}} (w_{m+i}^{j}) &, if \ m \geq 5 \\
				\phi_{{h}^{'}} (w_{m+i}^{j}) + 1 &, if \ i = 2 \ \& \ m =3 \\
				\phi_{{h}^{'}} (w_{i}^{j}) + 4mn &, if \ i \neq 2 \ \& \ m =3 \\
 \end{cases}
\end{split}
\end{equation}
The vertex sum of the vertex $(w_{i}^{0})$ for $1 \leq i \leq m$ is,
\begin{equation}   \nonumber
\begin{split}
\phi_{{h^{'}}} (w_{i}^{0}) &= 8mn^{2} + \phi_{{g^{'}}} (w_{i}^{0}) \\
\phi_{{h^{''}}} (w_{i}^{0}) &= \phi_{{h^{'}}} (w_{i}^{0}) - 6mn^{2}  - n \\
\phi_{{h^{'''}}} (w_{i}^{0}) &=
    \begin{cases}
        \phi_{{h^{'}}} (w_{i}^{0}) &, if \ m \geq 5 \\
				\phi_{{h^{'}}} (w_{i}^{0}) - 6mn^{2} &, if \ m =3
    \end{cases}
		\end{split}
\end{equation}
and the vertex sum of the vertex $(w_{m+i}^{0})$ for each $i, 1 \leq i \leq m$ and $j, 1 \leq j \leq n,$
\begin{equation}  \nonumber
\begin{split}
\phi_{{h^{'}}} (w_{m+i}^{0}) &= 
    \begin{cases}
        2mn^{2} + in^{2} + n(n+1) + n^{2} (i-1) &, if \ \text{i is odd}  \\
				2mn^{2} + 2n^{2} (m+i) + n &, if \ \text{i is even},
    \end{cases} \\
\phi_{{h^{''}}} (w_{m+i}^{0}) &= 6mn^{2} + n + \phi_{{h^{'}}} (w_{m+i}^{0}) \\
\phi_{{h^{'''}}} (w_{m+i}^{0}) &= 
  \begin{cases}
        \phi_{{h^{'}}} (w_{m+i}^{0}) &, if \ m \geq 5 \\
				\phi_{{h^{'}}} (w_{m+i}^{0}) + 6mn^{2} &, if \ i = 1 \ \& \ m =3 \\
	\phi_{{h^{'}}} (w_{m+i}^{0}) + 6mn^{2} + n &, if \ i \neq 1 \ \& \ m =3
    \end{cases}
		\end{split}
\end{equation}
Finally, the vertex sum of the vertex $w_{0}^{j}$ for $j, 1 \leq j \leq n$ is,
\begin{equation} \ \nonumber
\begin{split}
\phi_{h^{'}} (w_{0}^{j}) &= 8m^{2} n -2mn + m(4j-1) \\
\phi_{h^{''}} (w_{0}^{j}) &= 4m^{2} n +  \phi_{h^{'}} (w_{0}^{j}) \\
\phi_{h^{''}} (w_{0}^{j}) &= 
    \begin{cases}
        \phi_{{h^{'}}} (w_{0}^{j}) &, \ if \ m \geq 5  \\
				\phi_{{h^{'}}} (w_{0}^{j}) -4m^{2}n + 1   &, \ if \ m = 3
    \end{cases}
\end{split}
\end{equation}
The observation $ (1),(2)$ and $(3)$ as given in theorem. \ref{Wheelmodd} and the observation $(1)$ and $(2) $ from theorem. \ref{Helmmodd}) \ is also true under the function $h^{'},h^{''}$ and $h^{'''}$.
Therefore, it is clear that no two vertices belongs to the same set of vertices are equal. And from the above observations and the defined vertex sums, we have the following:
\begin{itemize}
	\item The vertex sum of the vertex $w_{0}^{0}$ is greater than all the other vertex sums.
	\item If the vertex having a odd sum then we have, \\
$\phi_{{h}^{'}} (w_{2m-2}^{n}) <  \phi_{{h}^{'}} (w_{m+1}^{0}) < \phi_{{h}^{'}} (w_{2m-1}^{0}) < \phi_{{h}^{'}} (w_{0}^{1})  < \phi_{{h}^{'}} (w_{0}^{n}), \phi_{{h}^{''}} (w_{2m-2}^{n}) < \phi_{{h}^{''}} (w_{0}^{1}) < \phi_{{h}^{''}} (w_{0}^{n}) < \phi_{{h}^{''}} (w_{2}^{0}) < \phi_{{h}^{''}} (w_{m-2}^{0})$.\\
When $m \geq 5, \phi_{{h}^{'''}} (w_{2m-2}^{n}) < \phi_{{h}^{'''}} (w_{1}^{1}) < \phi_{{h}^{'''}} (w_{m-1}^{n}) < \phi_{{h}^{'''}} (w_{0}^{1})$ \\
When $m=3$, the vertex $w_{i}^{j}$ for $1 \leq i \leq m$ and for $j, 1 \leq j \leq n$ is the only odd sum.
\item If the vertex having a even sum then we have, \\
$\phi_{{h}^{'}} (w_{m-1}^{n}) < \phi_{{h}^{'}} (w_{2}^{0}), \phi_{{h}^{''}} (w_{m-1}^{n}) < \phi_{{h}^{''}} (w_{m+1}^{0}).  $\\
When $m \geq 5, w_{i}^{0}$ is the only odd sum.\\
When $m = 3,  \phi_{{h}^{'''}} (w_{m+1}^{n}) < \phi_{{h}^{'''}} (w_{0}^{1}) < \phi_{{h}^{'''}} (w_{0}^{n}) < \phi_{{h}^{'''}} (w_{2}^{0}) < \phi_{{h}^{'''}} (w_{3}^{0}) < \phi_{{h}^{'''}} (w_{1}^{0})$.
\end{itemize} 
Hence, for any two distinct vertices we obtained distinct vertex sum. Therefore, $G$ is antimagic.\\
\\
\textbf{Proof of theorem \ref{flowermeven}.} The labeling of edges of graph $G$ are given in the following steps and we defined the labeling with the help of the labeling defined in the proof of theorem. (\ref{Helmmeven}) \\
\emph{Step 1: } Labeling the edges of $(w_{0}^{0}, w_{i}^{j}) \in \eta^{'}$ for each $i, 1 \leq i \leq m$ and for $j, 1 \leq j \leq n$ is defined as follows:
\begin{equation}   \nonumber
\begin{split}
h^{'} (w_{0}^{0},w_{i}^{j}) &= g^{'} (w_{0}^{0},w_{i}^{j}) + 2mn \\
h^{''} (w_{0}^{0},w_{i}^{j}) &=  h^{'} (w_{0}^{0},w_{i}^{j}) - 1 \\
h^{'''} (w_{0}^{0},w_{i}^{j}) &=  h^{'} (w_{0}^{0},w_{i}^{j})
		\end{split}
\end{equation}
and label the edges of $(w_{0}^{0},w_{m+i}^{j})$ for each $i, 1 \leq i \leq m$ and each $j, 1 \leq j \leq n,$
\begin{equation} \nonumber
\begin{split}
h^{'} (w_{0}^{0}, w_{m+i}^{j}) &= 
    \begin{cases}
        2j-1+n(i-2) ,& if \ 2 \leq i \leq  2\left\lfloor \dfrac{m}{4}\right\rfloor \ \& \ i \ even \\
        2n\left\lfloor \dfrac{m}{4}\right\rfloor + 2j-1,& if i =m, \\
				ni+2j-1, & if \ 2\left\lfloor \dfrac{m}{4}\right\rfloor + 2 \leq i \leq m-2 \ \& \ \ i \ even \\
				n(2m-1-i)+2j-1,& if \ 2 \left\lceil \dfrac{m}{4}\right\rceil + 1 \leq i \leq m-1 \ \& \ \ i \ odd \\
				2mn-2n\left\lceil \dfrac{m}{4}\right\rceil + 2j -1,& if \ i=1 for  \ m \neq 4\\
				3mn-4n\left\lceil \dfrac{m}{4}\right\rceil + (3-i)n+2j-1,& if \  3 \leq  i \leq 2\left\lceil \dfrac{m}{4}\right\rceil -1 \ \& \ \ i \ odd \ for \ m \neq 4\\
				4n+2j-1,& if i = 1 \ for \ m = 4\\
				6n+2j-1, & if i=3 \ for \ m=4
    \end{cases}\\
h^{''} (w_{0}^{0}, w_{m+i}^{j}) &= 	2mn+1+	h^{'} (w_{0}^{0}, w_{m+i}^{j}) \\
h^{'''} (w_{0}^{0}, w_{m+i}^{j}) &=  
\begin{cases}
        2j -1 ,& if \ i = 2 \ \& \ n = 2 \\
				2j  ,& if \ i = 2 \ \& \ n \neq 2 \\
        h^{'} (w_{0}^{0}, w_{m+i}^{j}) ,& \text{otherwise} 
    \end{cases}
		\end{split}
\end{equation}
\emph{Step 2: } Labeling the edges of $\eta^{''}$ as follows:\\
First, label the edges of $(w_{i}^{j}, w_{m+i}^{0})$ for each $i, 1 \leq i \leq m$ and $1 \leq j \leq n$,\\
\begin{equation}   \nonumber
\begin{split}
h^{'} (w_{i}^{j}, w_{m+i}^{0}) &= 2mn+ g^{'} (w_{i}^{j}, w_{m+i}^{0}) \\     
h^{''} (w_{i}^{j}, w_{m+i}^{0}) &= 4mn + h^{'} (w_{i}^{j}, w_{m+i}^{0}) \\
h^{'''} (w_{i}^{j}, w_{m+i}^{0}) &= h^{'} (w_{i}^{j}, w_{m+i}^{0}) - 1
	\end{split}
\end{equation}
Next, label the edges of $(w_{m+i}^{j}, w_{i}^{0})$ for each $i, 1 \leq i \leq m$ and $1 \leq j \leq n$,
 \begin{equation} \nonumber
\begin{split}
h^{'} (w_{m+i}^{j}, w_{i}^{0}) &= 2mn + g^{'} (w_{m+i}^{j}, w_{i}^{0}) \\
h^{''} (w_{m+i}^{j}, w_{i}^{0}) &=
    \begin{cases}
		 2j ,& if  \ i =2 \\
		h^{'} (w_{m+i}^{j}, w_{i}^{0}),& if \  i \neq 2
    \end{cases}\\		
h^{'''} (w_{m+i}^{j}, w_{i}^{0}) &= h^{'} (w_{m+i}^{j}, w_{i}^{0}) + 1	
		\end{split}
\end{equation}
Finally, we label the remaining edges of $\eta^{''}$ for each $i, 1 \leq i \leq m$ and for $j, 1 \leq j \leq n,$
\begin{equation} \nonumber
\begin{split}
h^{'} (w_{1}^{j}, w_{m}^{0}) &= 4mn+j \\
h^{''} (w_{1}^{j}, w_{m}^{0}) &= h^{'} (w_{1}^{j}, w_{m}^{0}) \\
h^{'''} (w_{1}^{j}, w_{m}^{0}) &= h^{'} (w_{1}^{j}, w_{m}^{0}) \\
h^{'} (w_{m}^{j}, w_{1}^{0}) &= 5mn+j
\end{split}
\end{equation} 
\begin{equation} \nonumber
\begin{split}
h^{''} (w_{m}^{j}, w_{1}^{0}) &= h^{'} (w_{m}^{j}, w_{1}^{0}) \\
h^{'''} (w_{m}^{j}, w_{1}^{0}) &= h^{'} (w_{m}^{j}, w_{1}^{0}) \\
h^{'} (w_{i}^{j}, w_{i+1}^{0}) &= 
\begin{cases}
        2mn+(2m+i)n+j &, if \ \text{i is odd}  \\
				2mn+(4m-i)n+j &, if \ \text{i is even}
    \end{cases} \\
h^{''} (w_{i}^{j}, w_{i+1}^{0}) &= h^{'} (w_{i}^{j}, w_{i+1}^{0})	 \\
h^{'''} (w_{i}^{j}, w_{i+1}^{0}) &= h^{'} (w_{i}^{j}, w_{i+1}^{0})		 \\
h^{'} (w_{i}^{0}, w_{i+1}^{j}) &=  
    \begin{cases}
        2mn + (4m-i)n + j &, if \ \text{i is odd}  \\
				2mn + (2m+i)n + j &, if \ \text{i is even}
    \end{cases} \\
h^{''} (w_{i}^{0}, w_{i+1}^{j}) &=  h^{'} (w_{i}^{0}, w_{i+1}^{j}) \\
h^{'''} (w_{i}^{0}, w_{i+1}^{j}) &=  h^{'} (w_{i}^{0}, w_{i+1}^{j})
\end{split}
\end{equation}
\emph{Step 3: }  
We label the edges of $\eta^{'''} $ nas follows:\\
The edges of $(w_{i}^{0}, w_{0}^{j}) \in \eta^{'''}$ for each $i, 1 \leq i \leq m$ and $1 \leq j \leq n, $
\begin{equation} \nonumber
\begin{split}
h^{'} (w_{i}^{0}, w_{0}^{j}) &=  2mn+ g^{'} (w_{i}^{0}, w_{0}^{j}) \\
h^{''} (w_{i}^{0}, w_{0}^{j}) &= 
    \begin{cases}
        2j - 1 ,& if \  i = 2\\
        h^{'} (w_{i}^{0}, w_{0}^{j}) - 6mn + 1,& if  \ i \neq 2
    \end{cases}\\
h^{'''} (w_{i}^{0}, w_{0}^{j}) &= h^{'} (w_{i}^{0}, w_{0}^{j})
		\end{split}
\end{equation}
Finally, we label the remaining edges of $(w_{m+i}^{0},w_{0}^{j}) \in \eta^{'''}$ for each $i, 1 \leq i \leq m$ and for $j, 1 \leq j \leq n$,
\begin{equation} \nonumber
\begin{split}
h^{'} (w_{m+i}^{0}, w_{0}^{j}) &= 
\begin{cases}
         2j+(i-1)n,& \text{if i is odd} \\
				mn+2j+(m-i)n,&  \text{if i is even}
\end{cases}\\
h^{''} (w_{m+i}^{0}, w_{0}^{j}) &= 	2mn+ 	h^{'} (w_{m+i}^{0}, w_{0}^{j}) -1 \\
h^{'''} (w_{m+i}^{0}, w_{0}^{j}) &= 
		\begin{cases}
        2j -1 ,& if  \ i = 1 \ \& \ n \neq 2  \\
				2j ,& if  \ i = 1 \ \& \ n = 2  \\
        h^{'} (w_{m+i}^{0}, w_{0}^{j}), & \text{Otherwise.}
    \end{cases}
\end{split}		
\end{equation}
From the above labeling schemes, we obtained the following vertex sums for every vertex $v.$ \\
The vertex sum of vertex $(w_{0}^{0})$ is,
\begin{equation} \nonumber
\begin{split}
\phi_{h^{'}} (w_{0}^{0}) = \phi_{h^{''}} (w_{0}^{0}) & =   8m^{2} n^{2} + mn,\\
\phi_{h^{'''}} (w_{0}^{0}) & =   
\begin{cases}
         \phi_{h^{'}} (w_{0}^{0}),& if  \  n = 2 \\
				\phi_{h^{'}} (w_{0}^{0}) + n,& if  \  n > 2 
\end{cases}
\end{split}
\end{equation}
For each $i, 1 \leq i \leq m$ and $1 \leq j \leq n, $ the vertex sum of the vertex $(w_{i}^{j})$ is,
\begin{equation}  \nonumber
\begin{split}
\phi_{{h}^{'}} (w_{i}^{j}) &= 8mn+ \phi_{{g}^{'}} (w_{i}^{j}) \\
\phi_{{h}^{''}} (w_{i}^{j}) &= 4mn-1+ \phi_{{h}^{'}} (w_{i}^{j}) \\	
	\phi_{{h}^{'''}} (w_{i}^{j}) &= \phi_{{h}^{'}} (w_{i}^{j}) - 1
\end{split}
\end{equation}
The vertex sum of the vertex $w_{m+i}^{j}$ is,
\begin{equation} \nonumber
\begin{split}
\phi_{{h}^{'}} (w_{m+i}^{j}) &= 
    \begin{cases}
         4j-2+2n(i-2)+2mn &, if \ 2 \leq i \leq  2\left\lfloor \dfrac{m}{4}\right\rfloor \ \& \ i \ even \\
        4n\left\lfloor \dfrac{m}{4}\right\rfloor+4j-2+2mn &, if \  i =m, \\
				2mn+2ni+4j-2 &, if \ 2\left\lfloor \dfrac{m}{4}\right\rfloor + 2 \leq i \leq m-2 \ \& \ \ i \ even \\
				6mn-2ni+4j-2n-2 &, if \ 2 \left\lceil \dfrac{m}{4}\right\rceil + 1 \leq i \leq m-1 \ \& \ \ i \ odd \\
				6mn-4n\left\lceil \dfrac{m}{4}\right\rceil +4j-2 &, if \ i=1 for  \ m \neq 4\\
				8mn-8n\left\lceil \dfrac{m}{4}\right\rceil +2(3-i)n+4j-2 &, if \  3 \leq  i \leq 2\left\lceil \dfrac{m}{4}\right\rceil -1 \ \& \ \ i \ odd \ for \ m \neq 4\\
				2mn+8n+4j-2 &, if \  i = 1 \ for \ m = 4\\
				2mn+12n+4j-2 &, if  \ i=3 \ for \ m=4
    \end{cases}\\
\phi_{{h}^{''}} (w_{m+i}^{j}) &= 
     \begin{cases}
		 \phi_{{h}^{'}} (w_{m+i}^{j},w_{i}^{0}) + 1 +2j &, if  \ i =2 \\
		2(\phi_{{h}^{'}} (w_{m+i}^{j},w_{i}^{0})) + 1 -2mn &, if  \ i \neq 2
\end{cases}
\end{split}
\end{equation}
\text{If} $n = 2$,
\begin{equation} \nonumber
\phi_{{h}^{'''}} (w_{m+i}^{j}) =
		2({h}^{'} (w_{0}^{0},w_{m+i}^{j})) + 2mn+1
\end{equation}
\text{If} $n > 2$,
\begin{equation} \nonumber
\phi_{{h}^{'''}} (w_{m+i}^{j}) = 
  \begin{cases}
		2 ( {h}^{'} (w_{0}^{0},w_{m+i}^{j})) + 2mn+2 &, if \  i =2 \\
		2 ( {h}^{'} (w_{0}^{0},w_{m+i}^{j}) ) + 2mn+1 &, if  \ i \neq 2 
\end{cases} 
\end{equation}
The vertex sum of the vertex $w_{i}^{0}$ for each $i, 1 \leq i \leq m$ is,
\begin{equation} \nonumber
\begin{split}
\phi_{{h}^{'}} (w_{i}^{0}) &= 8mn^{2} + \phi_{{g}^{'}} (w_{i}^{0}) \\
\phi_{{h}^{''}} (w_{i}^{0}) &= \phi_{{h}^{'}} (w_{i}^{0}) - 8mn^{2} + n \\
\phi_{{h}^{'''}} (w_{i}^{0}) &= \phi_{{h}^{'}} (w_{i}^{0}) + n 
\end{split}
\end{equation}
The vertex sum of the vertex $w_{m+i}^{0}$ for each $i, 1 \leq i \leq m$ is,
\begin{equation} \nonumber
\begin{split}
\phi_{{h}^{'}} (w_{m+i}^{0}) &= 2mn^{2} + 2(\phi_{{g}^{'}} (w_{m+i}^{0}))\\
\phi_{{h}^{''}} (w_{m+i}^{0}) &= 
  \begin{cases}
		 6mn^{2} - n + \phi_{{h}^{'}} (w_{m+i}^{0}),&   if \  i \ \text{is odd} \\
		8mn^{2} - n + \phi_{{h}^{'}} (w_{m+i}^{0}),& if  \ i \ \text{is even}
\end{cases}
\end{split}
\end{equation}
\begin{equation} \nonumber
\begin{split}
\text{If} \ n = 2, \\
\phi_{{h}^{'''}} (w_{m+i}^{0}) &= 
  \begin{cases}
		 2mn^{2}+2in^{2}+n,& if \ i \ \text{is odd} \\
		6mn^{2}+2n^{2} - 2in^{2} + n,& if \ i \ \text{is even} 
\end{cases}\\
\text{If} \ n \neq 2, \\
\phi_{{h}^{'''}} (w_{m+i}^{0}) &= 
  \begin{cases}
		 2mn^{2}+2in^{2},& if \ i = 1 \\
		2mn^{2}+2in^{2}+n,& if \ i \ \text{is odd} \\
		6mn^{2}+2n^{2} - 2in^{2} + n,& if  \ i \ \text{is even} 
\end{cases}\\
\end{split}
\end{equation}
Finally, the vertex sum of the vertex $w_{0}^{j}$ for each $j, 1 \leq j \leq n$ is,
\begin{equation} \nonumber
\begin{split}
\phi_{{h}^{'}} (w_{0}^{j}) &= 8m^{2} n - 2mn +4jm -m \\
  \phi_{{h}^{''}} (w_{0}^{j}) &= \phi_{{h}^{'}} (w_{0}^{j}) - 4m^{2} n - 1 \\
  \phi_{{h}^{'''}} (w_{0}^{j}) &= 
	\begin{cases}
		 \phi_{{h}^{'}} (w_{0}^{j}) ,& if \ n = 2 \\
		\phi_{{h}^{'}} (w_{0}^{j})  - 1,& if \ n \neq 2 
\end{cases}
\end{split}
\end{equation}
The observation $(1), (2),(3)$ and $(4)$ as given in Theorem. \ref{Helmmeven} is also true under the function $h^{'}, h^{''}$ and $h^{'''}$.  Therefore, it is clear that no two vertices belongs to the same set of vertices are equal. Thus, we have the following:
\begin{itemize}
	\item The vertex sum of the vertex $w_{0}^{0}$ is greater than all the other vertex sums.
	\item If the vertex having a odd sum then we have, \\
	$\phi_{{h}^{'}} (w_{2}^{n}) < \phi_{{h}^{'}} (w_{2}^{0}), \phi_{{h}^{''}} (w_{m+3}^{n}) < \phi_{{h}^{''}} (w_{0}^{1}) < \phi_{{h}^{''}} (w_{0}^{n}) < \phi_{{h}^{''}} (w_{m+1}^{n})$.\\
		If $n = 2, \phi_{{h}^{'''}} (w_{m+3}^{n}) < \phi_{{h}^{'''}} (w_{1}^{1})$. \\
	If $n > 2, \phi_{{h}^{'''}} (w_{m+3}^{n}) < \phi_{{h}^{'''}} (w_{1}^{1}) < \phi_{{h}^{'''}} (w_{2}^{n}) < \phi_{{h}^{'''}} (w_{0}^{1})$. 
\item If the vertex having a even sum then we have, \\
$\phi_{{h}^{'}} (w_{m+3}^{n}) < \phi_{{h}^{'}} (w_{m+1}^{0}) < \phi_{{h}^{'}} (w_{m+2}^{0}) < \phi_{{h}^{'}} (w_{0}^{1}), 
\phi_{{h}^{''}} (w_{m+2}^{n}) < \phi_{{h}^{''}} (w_{1}^{1}) < \phi_{{h}^{''}} (w_{2}^{n}) < \phi_{{h}^{''}} (w_{2}^{0}).$\\
If $n = 2, \phi_{{h}^{'''}} (w_{m+2}^{0}) < \phi_{{h}^{'''}} (w_{2}^{0}) \phi_{{h}^{'''}} (w_{3}^{0}) < \phi_{{h}^{'''}} (w_{0}^{1}).  $\\
If $n > 2, \phi_{{h}^{'''}} (w_{m+2}^{n}) < \phi_{{h}^{'''}} (w_{m+2}^{0}) < \phi_{{h}^{'''}} (w_{2}^{0}).$
\end{itemize}
Hence, for any two distinct vertices we obtained distinct vertex sum. Therefore, $G$ is antimagic.\\
From lemma. (\ref{flowern=1}) and theorem. (\ref{flowermodd}), (\ref{flowermeven}), we obtain the following theorem.
\begin{theorem} \label{Flowermain}
The tensor product of Flower $Fl_m, m \geq 3$ and star $K_{1,n}, n \geq 2$  admits antimagic labeling.
\end{theorem}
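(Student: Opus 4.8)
The plan is to obtain Theorem~\ref{Flowermain} as a direct consequence of the three preceding results, combined by a case analysis on the parity of $m$. Since all of the genuine construction work---the explicit edge labelings $h^{'},h^{''},h^{'''}$ together with the verification that they induce pairwise distinct vertex sums---has already been carried out in Lemma~\ref{flowern=1}, Theorem~\ref{flowermodd} and Theorem~\ref{flowermeven}, the remaining task is purely to check that these results jointly exhaust the admissible parameter range $m \geq 3$, $n \geq 2$.

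First I would split on the parity of $m$. For $m$ odd with $m \geq 3$ and $n \geq 2$, Theorem~\ref{flowermodd} supplies an antimagic labeling of $Fl_m \times K_{1,n}$; for $m$ even with $m \geq 4$ (equivalently, $m$ even and $m \geq 3$) and $n \geq 2$, Theorem~\ref{flowermeven} supplies one. Every integer $m \geq 3$ is either odd or even, so these two cases cover all admissible $m$, and in each case the hypothesis on $n$ matches. Should one wish to extend the assertion to $n = 1$, Lemma~\ref{flowern=1} handles that boundary value for every $m \geq 3$, exactly mirroring the role played by Lemma~\ref{Helmn=1} inside Theorem~\ref{Helmmain}.

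The conceptual heavy lifting, and hence the real obstacle, lies inside the two sub-theorems rather than in this assembly. The structural idea they exploit is that $H_m \times K_{1,n}$ sits as an induced subgraph of $Fl_m \times K_{1,n}$, so the flower labelings are defined by translating the helm labelings by suitable multiples of $mn$ (for instance $h^{'}=2mn+g^{'}$ on most edge families) and then separately labeling the extra edges incident to the duplicated rim vertices $w_{m+i}$ and to the hub $w_{0}^{0}$ and center vertices $w_{0}^{j}$. The design keeps the label bands for the vertex classes $\gamma^{'},\gamma^{''},\gamma^{'''}$ numerically separated, so that no two vertices from different classes can collide, while the monotone chains recorded in observations (1)--(4) of Theorem~\ref{Helmmeven} (and their analogues carried over from Theorem~\ref{Wheelmodd}) guarantee distinctness within each class. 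What then remains is a finite list of boundary comparisons between the largest sum in one class and the smallest in the next, together with the special small-case adjustments for $m=3$ and $n=2$; verifying those inequalities is the delicate, computation-heavy step, whereas the final combination proving Theorem~\ref{Flowermain} itself is immediate once they are in hand.
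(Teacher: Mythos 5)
Your proposal is correct and takes essentially the same route as the paper, which derives Theorem~\ref{Flowermain} in one line by combining Lemma~\ref{flowern=1} with Theorem~\ref{flowermodd} ($m$ odd) and Theorem~\ref{flowermeven} ($m$ even), exactly the parity split you describe. Your side remark is also accurate: since the theorem only asserts the range $n \geq 2$, Lemma~\ref{flowern=1} is not strictly needed for the stated conclusion, even though the paper cites it alongside the two parity theorems.
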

\section{Conclusion}
From theorem \ref{Wheelmain}, \ref{Helmmain} and \ref{Flowermain}, we validated the result of antimagicness for the tensor product of wheel related graphs such as Wheel, Helm and Flower with Star. These results strongly supports the Hartsfield and Ringels conjecture. We also believes that, these results will motivates the researchers to study antimagic labeling for more graph products which are not done so far. Hence, we conclude the paper with the following question.\\ \\
\textbf{Problem 1.} Identify the general antimagic labeling schemes for the tensor product of $G \times H$ where $G$ and $H$ be any antimagic graph (either be connected or disconnected) which consists of odd cycle in any one of the graph $G$ or $H$?

\end{document}